\documentclass{article} 
\usepackage{iclr2027_conference,times}

\usepackage{url}

\title{Dimension-Free Decentralized Nonsmooth \\Nonconvex Stochastic Optimization}

\iclrfinalcopy

\author{Yuanyu Wan$^{1,2}$,~~Lan Xue$^{1,2}$,~~Haomin Bai$^3$,~~Tong Wei$^4$,~~Mingli Song$^{1,2}$\\
$^1$School of Software Technology, Zhejiang University \\
$^2$State Key Laboratory of Blockchain and Security, Zhejiang University\\
$^3$School of Artificial Intelligence, Nanjing University \\
$^4$School of Computer Science and Engineering,  Southeast University \\
\texttt{\{wanyy,xuelan,brooksong\}@zju.edu.cn}\\ 
\texttt{baihm@lamda.nju.edu.cn},~~\texttt{weit@seu.edu.cn}  
}

\usepackage{algorithm}
\usepackage{algorithmic}
\usepackage{amsfonts} 
\usepackage{amsmath}
\usepackage{amssymb}
\usepackage{mathtools}
\usepackage{amsthm}

\newtheorem{thm}{Theorem}
\newtheorem{prop}{Proposition}
\newtheorem{lem}{Lemma}
\newtheorem{cor}{Corollary}
\newtheorem{myDef}{Definition}
\newtheorem{assum}{Assumption}
\def \x {\mathbf{x}}
\def \y {\mathbf{y}}
\def \w {\mathbf{w}}
\def \z {\mathbf{z}}
\def \v {\mathbf{v}}
\def \g {\mathbf{g}}
\def \ze {\mathbf{0}}
\def \G {\mathcal{G}}
\def \u {\mathbf{u}}
\def \E {\mathbb{E}}
\def \R {\mathbb{R}}

\def \A {\mathcal{A}}
\DeclareMathOperator*{\argmin}{argmin}
\DeclareMathOperator*{\poly}{poly}

\begin{document}

\maketitle

\begin{abstract}
We investigate decentralized nonsmooth nonconvex stochastic optimization over a network of $n$
nodes, with the goal of finding an $(\delta,\epsilon)$-Goldstein stationary point. The best existing
algorithm achieves $O(\delta^{-1}(\epsilon^{-3}+d\epsilon^{-1}))$ sample complexity and
$\widetilde{O}(\gamma^{-1/2}\delta^{-1}(\epsilon^{-3}+d\epsilon^{-1}))$ communication complexity,
where $d$ is the problem dimension and $\gamma$ is the spectral gap of the communication matrix.
However, the polynomial dependence on $d$ can be a major bottleneck in high-dimensional regimes. In
this paper, we propose a novel algorithm that achieves $O(\delta^{-1}\epsilon^{-3})$ sample
complexity and $\widetilde{O}(\gamma^{-1/2}\delta^{-1}\epsilon^{-3})$ communication complexity. The
primary technique is an elegant decentralized online-to-nonconvex conversion that reduces the
original problem to a decentralized online convex optimization (D-OCO) problem. A key property of
our conversion is that its consensus requirements can be inherited directly from the consensus of
the underlying D-OCO decisions. In particular, this property enables us to establish an explicit
connection between the dimension dependence and the consensus error, which in turn shows that the
polynomial dependence on $d$ can be removed with only logarithmic additional communication.
\end{abstract}

\section{Introduction}
Decentralized stochastic optimization (DSO) \citep{Ram2010,Lian2017} has become an attractive
alternative to its centralized counterpart as modern learning problems increasingly involve
large-scale models and data distributed across multiple devices. Given a network of $n$ nodes and
the problem dimension $d$, DSO can be formulated as
\[
\min_{\x\in\R^d}
f(\x)=\frac{1}{n}\sum_{i=1}^n f_i(\x)
\]
where $f_i(\x)=\E_{\xi_i}\!\left[F_i(\x;\xi_i)\right]$ is the local objective held by node $i$, and
each node $i$ can only access the stochastic component $F_i(\cdot;\xi_i):\R^d\to\R$ indexed by a
random variable $\xi_i$ and communicate with its neighboring nodes. For convex objectives, the goal
is typically to find a solution with a small objective gap $f(\x)-f(\x^\ast)$ or a small error
$\|\x-\x^\ast\|$, where $\|\cdot\|$ denotes the Euclidean norm and
$\x^\ast\in\arg\min_{\x\in\R^d}f(\x)$. For nonconvex but smooth objectives, it is common to seek an
$\epsilon$-stationary point, i.e., $\|\nabla f(\x)\|\leq \epsilon$. These cases have been
extensively studied, leading to numerous algorithms with theoretical guarantees
\citep{Lan-2018,Choco-Gossip-ICML19,D-SGD-ICML20,Koloskova-NIPS2021,ADSGD-ICML21,Yuan-NIPS22}.

However, in many practical problems, such as training ReLU neural networks \citep{ReLU-ICML23}, the
objective function is often both nonconvex and nonsmooth. Motivated by these problems, we
investigate DSO with nonsmooth nonconvex objectives and aim to find an $(\delta,\epsilon)$-Goldstein
stationary point.\footnote{This is a tractable extension of the $\epsilon$-stationary point to
nonsmooth objectives and will be defined later.} Although a few recent studies
\citep{Zero-DSGD-Lin-2024,Sahinoglu-ICML24,Chen-Arxiv-2601} have proposed algorithms for this
setting, they remain unsatisfactory in terms of both sample and communication complexities.
Specifically, the best existing algorithm \citep{Chen-Arxiv-2601} achieves
$O(\delta^{-1}(\epsilon^{-3}+d\epsilon^{-1}))$ sample complexity and
$\widetilde{O}(\gamma^{-1/2}\delta^{-1}(\epsilon^{-3}+d\epsilon^{-1}))$ communication complexity,
where $\gamma$ is the spectral gap of the communication matrix.\footnote{The $\tilde{O}(\cdot)$
notation hides constant factors as well as polylogarithmic factors.} Such a polynomial dependence on
$d$ can limit the scalability of this algorithm in high-dimensional regimes. Thus, it is natural to
ask whether the polynomial dependence on $d$ can be further eliminated from both the sample and
communication complexities of DSO with nonsmooth nonconvex objectives.

In this paper, we provide an affirmative answer to the above question. Our primary technique is a
decentralized extension of the online-to-nonconvex conversion originally proposed for the
non-distributed setting \citep{Cutkosky2023}. Specifically, \citet{Cutkosky2023} reduce nonsmooth
nonconvex stochastic optimization to an  online convex optimization (OCO) problem
\citep{Online:suvery,hazan2016introduction}. The underlying OCO algorithm generates update
directions using losses constructed from stochastic gradients evaluated at random interpolation
points along the line segments connecting consecutive iterates. Inspired by \citet{Cutkosky2023},
our conversion reduces DSO with nonsmooth nonconvex objectives to a decentralized online convex
optimization (D-OCO) problem \citep{wan2024nearly,wan2024optimal}. The underlying D-OCO algorithm
generates update directions for each node using losses constructed from local stochastic gradients
evaluated at random interpolation points along the line segments connecting consecutive local
iterates. Compared with \citet{Cutkosky2023}, our key additional challenges are to control the
consensus among the local iterates and random interpolation points across different nodes, as well
as the discrepancy between the average of local gradients and the global gradient evaluated at the
average random interpolation point.

To this end, we first observe that the consensus error of the local iterates is proportional to that
of the local update directions generated by the D-OCO algorithm. In contrast, the consensus error of
the random interpolation points does not generally inherit this property, because different nodes
may use different interpolation weights. Interestingly, we show that this issue can be addressed by
using a shared random interpolation weight across nodes, which requires only a common random seed.
Furthermore, to control the gradient discrepancy error, we apply the classical randomized smoothing
technique \citep{Duchi-SIAM12} to the original objective functions. This yields an error bound
proportional to the product of the consensus error of the random interpolation points and the
smoothness constant induced by randomized smoothing. Although such a smoothness constant scales as
$\sqrt{d}$, our preceding consensus analysis allows us to cancel this factor by sufficiently
tightening the consensus error of the underlying D-OCO decisions. Taken together, we reduce all
additional challenges introduced by decentralization to controlling the consensus error of the
underlying D-OCO decisions. Importantly, existing D-OCO algorithms such as \citet{wan2024nearly} can
achieve the required consensus error with only logarithmic additional communication. Finally, by
combining our conversion with the D-OCO algorithm of \citet{wan2024nearly}, we obtain a novel
algorithm with $O(\delta^{-1}\epsilon^{-3})$ sample complexity and
$\widetilde{O}(\gamma^{-1/2}\delta^{-1}\epsilon^{-3})$ communication complexity.

Note that two existing algorithms \citep{Sahinoglu-ICML24,Chen-Arxiv-2601} can also be interpreted
as decentralized online-to-nonconvex conversions and have employed the randomized smoothing
technique. However, the consensus among their local iterates is guaranteed by further applying the
standard or accelerated gossip \citep{Xiao-Gossip04,Acc_Gossip} to the iterates themselves. This
incurs additional communication outside the D-OCO algorithm and complicates the analysis. More
critically, without the shared random interpolation weight used in our conversion, the consensus
error of their random interpolation points contains an \(O(1)\) term. Consequently, the \(\sqrt{d}\)
factor appearing in their gradient discrepancy error cannot be canceled regardless of how small the
consensus error of the underlying D-OCO decisions is. In contrast, our conversion provides a clean
and unified way to exploit the consensus property of the underlying D-OCO decisions and allows us to
achieve complexity bounds without polynomial dependence on \(d\) for the first time.

\section{Related Work}
In this section, we briefly review related work on nonsmooth nonconvex stochastic optimization in
both the non-distributed and decentralized settings.

\subsection{Non-distributed Nonsmooth Nonconvex Stochastic Optimization}
Early studies on non-distributed nonsmooth nonconvex stochastic optimization mainly focus on
establishing asymptotic convergence guarantees
\citep{Kiwiel07,Majewski18-Arxiv,Davis-FOCM20,Bolte-MP21}. The pioneering work of
\citet{Zhang2020nonsmooth} provides the first non-asymptotic analysis for this problem. Note that
for nonsmooth objectives, \citet{Zhang2020nonsmooth} show that the \(\epsilon\)-stationary point cannot be found in finite time. To this end, they introduce
a tractable relaxation, namely the $(\delta,\epsilon)$-Goldstein stationary point, and establish the
first finite-time guarantee for this notion, with $O(\delta^{-1}\epsilon^{-4})$ sample complexity.
Moreover, under a deterministic generalized gradient oracle, they obtain an improved oracle
complexity of $O(\delta^{-1}\epsilon^{-3})$. Since then, this stationarity criterion has attracted
growing research interest, particularly in the deterministic setting
\citep{Davis2022gradient,Tian2022finite,Kornowski2022small,%
Jordan2023deterministic,Tian2024dimension}.
The first improvement in the stochastic setting is achieved by \citet{Cutkosky2023}, who reduce the
sample complexity to $O(\delta^{-1}\epsilon^{-3})$ and show the optimality of this rate. The key
technique of \citet{Cutkosky2023} is a novel online-to-nonconvex conversion that employs an OCO
algorithm \citep{hazan2016introduction} to generate update directions rather than the iterates
themselves. Specifically, the losses fed to the underlying OCO algorithm are constructed from
stochastic gradients evaluated at random interpolation points along the line segments connecting
consecutive iterates, and the final output is also formed from these interpolation points.

In addition, the more challenging zeroth-order setting, where only function values are available,
has been studied recently \citep{Lin-2022,Chen-ICML23,Kornowski-JMLR24}. Specifically, in this
setting, the randomized smoothing technique \citep{Duchi-SIAM12} serves as a natural bridge from
zeroth-order information to gradient information. Meanwhile, it transforms the original nonsmooth
objective into a smooth surrogate, at the price of introducing a $\sqrt{d}$ factor into the
smoothness constant. By applying randomized smoothing and exploiting the smoothness of the surrogate
objective, \citet{Lin-2022} first propose a gradient-free method with
$O(d^{3/2}\delta^{-1}\epsilon^{-4})$ zeroth-order sample complexity. Later, \citet{Chen-ICML23}
further improve the zeroth-order sample complexity to $O(d^{3/2}\delta^{-1}\epsilon^{-3})$ by using
a variance reduction technique \citep{SPIDER-2018}. More interestingly, \citet{Kornowski-JMLR24}
establish a connection between the Goldstein subdifferentials of the original and smoothed
objectives, which allows them to combine randomized smoothing with the online-to-nonconvex
conversion of \citet{Cutkosky2023}, rather than relying on the smoothness of the surrogate
objective. This yields the optimal zeroth-order sample complexity of $O(d\delta^{-1}\epsilon^{-3})$.

\subsection{Decentralized Nonsmooth Nonconvex Stochastic Optimization}
For decentralized nonsmooth nonconvex stochastic optimization,
\citet{Zero-DSGD-Lin-2024} provide the first finite-time analysis under the zeroth-order setting. They propose a decentralized gradient-free
method (DGFM) and its improved variant called DGFM$^{+}$ by extending the algorithms of
\citet{Lin-2022} and \citet{Chen-ICML23}, respectively. Specifically, DGFM achieves
$O(nd^{3/2}\poly(1/\gamma)\delta^{-1}\epsilon^{-4})$
zeroth-order sample complexity and
$O(d^{3/2}\poly(1/\gamma)\delta^{-1}\epsilon^{-4})$
communication complexity, whereas DGFM$^{+}$ achieves
$O(n^{1/2}d^{1/2}\delta^{-1}
(n\epsilon^{-2}+d\epsilon^{-3}))$
zeroth-order sample complexity and
$O(n^{1/2}d^{1/2}\poly(1/\gamma)\delta^{-1}\epsilon^{-2})$
communication complexity. As in the non-distributed case, the polynomial dependence of these
complexity bounds on $d$ is intrinsic to zeroth-order optimization. Later, \citet{Sahinoglu-ICML24}
consider the first-order setting and propose a multi-epoch decentralized online learning algorithm
(ME-DOL) by extending the online-to-nonconvex conversion of \citet{Cutkosky2023}. They adopt an
existing D-OCO algorithm of \citet{D-OMD-TAC17} to generate update directions for each node and
encounter the three aforementioned challenges introduced by decentralization.

For the first challenge, i.e., controlling the consensus among the local iterates, they further
apply one step of standard gossip \citep{Xiao-Gossip04} to the iterates after each update. For the
second challenge, i.e., controlling the consensus of the random interpolation points, no additional
mechanism is introduced. Instead, the resulting bound consists of the consensus error of the local
iterates together with an additional $O(1)$ term. For the third challenge, i.e., controlling the
gradient discrepancy error, they apply randomized smoothing \citep{Duchi-SIAM12} to the original
objective functions and exploit the smoothness of the resulting surrogate functions together with
the consensus bound for the random interpolation points. As a consequence of these consensus
properties  and the $\sqrt{d}$ factor in the smoothness constant, ME-DOL achieves only
$O(n\delta^{-1}\epsilon^{-3}(n\gamma^{-2}+dn^{1/2}\gamma^{-1}))$
sample complexity and
$O(\delta^{-1}\epsilon^{-3}(n\gamma^{-2}+dn^{1/2}\gamma^{-1}))$
communication complexity. Note that these bounds do not exhibit a clear advantage over those of
DGFM$^{+}$, which operates in the zeroth-order setting.

To address this limitation, \citet{Chen-Arxiv-2601} propose an improved algorithm called
decentralized online-to-nonconvex conversion with client sampling (DOC$^2$S). Their key idea is to
replace the single standard gossip step \citep{Xiao-Gossip04} used in ME-DOL with multiple
accelerated gossip steps \citep{Acc_Gossip}, which provide sharper consensus guarantees with respect
to the dependence on $n$ and $\gamma$. Moreover, they incorporate a client sampling technique
\citep{Chen-TMLR22} that reduces the sample complexity by a factor of $n$, since only one node
queries a local stochastic gradient at each iteration. Building on these two improvements, DOC$^2$S
achieves $O(\delta^{-1}(\epsilon^{-3}+d\epsilon^{-1}))$ sample complexity and
$\widetilde{O}(\gamma^{-1/2}\delta^{-1}(\epsilon^{-3}+d\epsilon^{-1}))$ communication complexity,
thereby exhibiting a clear benefit from access to first-order information. Nonetheless, the
polynomial dependence on $d$ remains unsatisfactory.

\section{Preliminaries}
In this section, we introduce the necessary preliminaries including the problem setup, randomized
smoothing, and D-OCO.

\subsection{Problem Setup}
Following previous studies \citep{Sahinoglu-ICML24,Chen-Arxiv-2601}, we consider a decentralized
network whose communication topology is modeled by an undirected graph $\G=([n],E)$, where
$[n]=\{1,2,\ldots,n\}$ denotes the set of nodes and $E\subseteq[n]\times[n]$ denotes the set of
edges. Each communication round among the nodes occurs via a gossip step \citep{Xiao-Gossip04},
i.e., computing a weighted average of some local variables according to a weight matrix
$P\in\mathbb{R}^{n\times n}$. It is common to impose the following assumption on $P$, and the
spectral gap of $P$ is then given by $\gamma=1-\sigma_2(P)\in(0,1]$, where $\sigma_{2}(P)$ denotes
the second largest singular value of $P$.
\begin{assum}
\label{assum:p}
The communication matrix $P \in \mathbb{R}^{n \times n}$ is supported on the graph $\G=([n], E)$,
and doubly stochastic, which satisfies: (i) $P_{ij}>0$ only if $(i, j) \in E$ or $i=j$; (ii)
$\sum_{j=1}^{n} P_{i j}=1, \forall i \in[n]$, and $\sum_{i=1}^{n} P_{i j}=1, \forall j \in[n]$.
Moreover, $P$ is symmetric and positive semidefinite, and $\sigma_{2}(P)<1$. 
\end{assum}

Besides, there are also several standard assumptions on the nonsmooth nonconvex objectives.
\begin{assum}
\label{assum:lower-bounded}
The global objective $f(\x)$ is lower bounded, i.e., $
f^\star=\inf_{\x\in\R^d}f(\x)>-\infty.$
\end{assum}
\begin{assum}
\label{assum:lip}
For each node $i\in[n]$, the stochastic component
$F_i(\x;\xi_i)$ is $L(\xi_i)$-Lipschitz, i.e., $
|F_i(\x;\xi_i)-F_i(\y;\xi_i)|
\leq
L(\xi_i)\|\x-\y\|,
\forall\,\x,\y\in\R^d$.
Moreover, the random Lipschitz constant has a bounded second moment, i.e., there exists $L>0$ such
that $\E_{\xi_i}[L(\xi_i)^2]\leq L^2,
\forall\, i\in[n].$
\end{assum}
\begin{assum}
\label{assum:oracle}
Each node $i\in[n]$ has access to a local stochastic
gradient oracle that returns $\nabla F_i(\x;\xi_i)$ for a given $\x$. The oracle is unbiased, i.e.,
$
\E_{\xi_i}[\nabla F_i(\x;\xi_i)]
=
\nabla f_i(\x)$. Moreover, there exists a constant $G>0$ such that  $
\E_{\xi_i}\!\left[
\|\nabla F_i(\x;\xi_i)\|^2
\right]
\leq G^2$.
\end{assum}
\textbf{Remark.} Assumption \ref{assum:lip} directly implies that each local objective $f_i(\x)$ is
$L$-Lipschitz, which will be used in our analysis. Moreover, the sample-wise Lipschitz assumption on
$F_i(\x;\xi_i)$ ensures that it is differentiable almost everywhere, so that the stochastic gradient
oracle in Assumption \ref{assum:oracle} is well-defined almost everywhere.

Under these assumptions, our goal is to find an $(\delta,\epsilon)$-Goldstein stationary point
\citep{Zhang2020nonsmooth} of the global objective $f(\x)$. To be precise, for a Lipschitz function
$f(\cdot):\R^d\mapsto\R$, let $\partial f(\x)$ denote its Clarke subdifferential
\citep{Clarke-1990}. The Goldstein $\delta$-subdifferential \citep{Goldstein1977} is defined as
$\partial_\delta f(\x)
=
\operatorname{conv}(
\cup_{\|\y-\x\|\leq\delta}
\partial f(\y)
)$, where $\operatorname{conv}(\cdot)$ denotes the convex hull of the given set. Then, the
$(\delta,\epsilon)$-Goldstein stationary point can be formally defined as below.
\begin{myDef}
\label{def:goldstein}
A point $\x\in\R^d$ is called $(\delta,\epsilon)$-Goldstein stationary for $f(\cdot):\R^d\mapsto\R$
if $\|\nabla f(\x)\|_\delta\leq \epsilon
$, where $\|\nabla f(\x)\|_\delta=
\min\{\|\g\|:\g\in\partial_\delta f(\x)\}$.
\end{myDef}

\subsection{Randomized Smoothing}
Randomized smoothing is a classical technique for constructing a smooth surrogate of a
nonsmooth function \citep{Duchi-SIAM12}. Let $f(\cdot):\R^d\to\R$ denote the function and
let $\mathcal{B}^d=\{\u\in\R^d:\|\u\|\leq 1\}$ denote the unit Euclidean ball centered at the
origin of $\R^d$. For any smoothing radius $\delta>0$, the smoothed version of $f(\x)$ is
defined as
\begin{equation}
    f_\delta(\x)
    =\E_{\u\sim\operatorname{Unif}(\mathcal{B}^d)}[f(\x+\delta\u)]
    \label{eq:smoothing-definition}
\end{equation}
where $\operatorname{Unif}(\mathcal{B}^d)$ denotes the uniform distribution on
$\mathcal{B}^d$. As summarized by \citet{Lin-2022}, the smoothed 
function has the following basic properties.
\begin{lem}[Proposition 2.3 of \citet{Lin-2022}]
\label{lem:smoothing}
Suppose that $f(\cdot):\R^d\to\R$ is $L$-Lipschitz. Then, for any $\delta>0$, its smoothed
version defined in \eqref{eq:smoothing-definition} satisfies:
(i) $|f_\delta(\x)-f(\x)|\leq \delta L$ for all $\x\in\R^d$;
(ii) $f_\delta(\x)$ is differentiable and $L$-Lipschitz; (iii) $f_\delta(\x)$ has
$(c\sqrt{d}L\delta^{-1})$-Lipschitz gradients for some constant $c>0$.
\end{lem}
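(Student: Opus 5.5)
We prove the three properties of Lemma~\ref{lem:smoothing} in turn, working directly from the definition \eqref{eq:smoothing-definition}; the main tool for (iii) is a surface-integral representation of the gradient.

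For (i), for every $\u\in\mathcal{B}^d$ the Lipschitz property gives $|f(\x+\delta\u)-f(\x)|\leq L\delta\|\u\|\leq \delta L$. Taking the expectation over $\u\sim\operatorname{Unif}(\mathcal{B}^d)$ and using Jensen's inequality yields $|f_\delta(\x)-f(\x)|\leq\delta L$.

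For (ii), Lipschitzness follows from the same pointwise argument:
\[
|f_\delta(\x)-f_\delta(\y)|\leq \E_{\u}\big[|f(\x+\delta\u)-f(\y+\delta\u)|\big]\leq L\|\x-\y\|.
\]
For differentiability, we write $f_\delta=f*\rho_\delta$, where $\rho_\delta$ is the normalized indicator of the ball $\delta\mathcal{B}^d$. By Rademacher's theorem $f$ is differentiable almost everywhere with $\|\nabla f\|\leq L$, and $f$ is absolutely continuous on lines. Hence we may differentiate under the integral, which gives $\nabla f_\delta(\x)=\E_{\u}[\nabla f(\x+\delta\u)]$.

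For (iii), the key step is the divergence-theorem (Stokes) identity
\[
\nabla f_\delta(\x)=\frac{d}{\delta}\,\E_{\v\sim\operatorname{Unif}(\mathbb{S}^{d-1})}\big[f(\x+\delta\v)\v\big],
\]
where $\mathbb{S}^{d-1}$ is the unit sphere. We derive it by differentiating the ball integral $\int_{\delta\mathcal{B}^d}f(\x+\mathbf{z})\,d\mathbf{z}$ in $\x$, which turns it into a surface integral over $\partial(\delta\mathcal{B}^d)$ with outward normal $\v$. The ratio of surface area to volume of the ball is $d/\delta$, and this is where the factor $d$ appears.

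This representation alone gives $\|\nabla f_\delta(\x)-\nabla f_\delta(\y)\|\leq (dL/\delta)\|\x-\y\|$, which is too weak by a factor of $\sqrt d$. To obtain the $\sqrt d$ rate, we bound the difference in dual form. Fix a unit vector $\w$ and write
\[
\big\langle \nabla f_\delta(\x)-\nabla f_\delta(\y),\w\big\rangle=\frac{d}{\delta}\,\E_{\v}\Big[\big(f(\x+\delta\v)-f(\y+\delta\v)\big)\langle\v,\w\rangle\Big].
\]
By Cauchy--Schwarz, this is at most
\[
\frac{d}{\delta}\,L\|\x-\y\|\,\sqrt{\E_{\v}\big[\langle\v,\w\rangle^2\big]}.
\]
Since $\E_{\v}[\langle\v,\w\rangle^2]=1/d$, the right-hand side equals $\sqrt{d}\,L\delta^{-1}\|\x-\y\|$. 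Taking the supremum over unit $\w$ gives property (iii) with $c=1$, and any constant $c\geq1$ also works.

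The main obstacle is justifying the sphere representation of $\nabla f_\delta$ for a merely Lipschitz $f$. We handle this by first proving the identity for smooth $f$ and then passing to the limit with mollifiers $f*\phi_\eta\to f$ uniformly, using the fact that the Lipschitz constants are preserved under mollification.
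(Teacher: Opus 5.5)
Your proof is correct. The key step, Cauchy--Schwarz with $\E\langle\v,\w\rangle^2=1/d$, does give the $\sqrt{d}$ rate, with the explicit constant $c=1$. The paper gives no proof of its own here; it imports the lemma from \citet{Lin-2022}. The classical argument behind that citation, from the randomized-smoothing literature (e.g.\ \citet{Duchi-SIAM12}), takes a different route. It uses only the ball-average formula $\nabla f_\delta(\x)=\E_{\u}[\nabla f(\x+\delta\u)]$ that you obtain in (ii), and then bounds
\[
\|\nabla f_\delta(\x)-\nabla f_\delta(\y)\|\leq L\,\frac{\operatorname{vol}\big((\x+\delta\mathcal{B}^d)\triangle(\y+\delta\mathcal{B}^d)\big)}{\operatorname{vol}(\delta\mathcal{B}^d)}\leq \frac{2\operatorname{vol}_{d-1}(\mathcal{B}^{d-1})}{\operatorname{vol}_d(\mathcal{B}^d)}\cdot\frac{L\|\x-\y\|}{\delta}.
\]
The last step comes from slicing along lines parallel to $\x-\y$, and a Gamma-function estimate then shows the volume ratio is $O(\sqrt{d})$. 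Your sphere (Stokes) representation replaces this geometric computation with a short duality argument and needs no Gamma asymptotics. The price is that you must justify the identity for merely Lipschitz $f$. Your mollification handles this: $(f*\phi_\eta)_\delta=f_\delta*\phi_\eta$ has $\sqrt{d}\,L/\delta$-Lipschitz gradients uniformly in $\eta$, and these converge pointwise to the continuous $\nabla f_\delta=\nabla f*\rho_\delta$. The volume route, by contrast, needs nothing beyond (ii). The two arguments are dual views of the same quantity. If you bound by $\E|\langle\v,\w\rangle|$ instead of applying Cauchy--Schwarz, you get exactly the constant $d\,\E|\langle\v,\w\rangle|=2\operatorname{vol}_{d-1}(\mathcal{B}^{d-1})/\operatorname{vol}_d(\mathcal{B}^d)\approx\sqrt{2d/\pi}$. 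This is slightly sharper than your $\sqrt{d}$, asymptotically by a factor of $\sqrt{2/\pi}$.
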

Lemma \ref{lem:smoothing} allows us to convert the original nonsmooth problem into a smooth one 
and thereby exploit algorithmic and analytical techniques developed for smooth optimization. 
However, this alone only yields stationarity guarantees for the smoothed surrogate. 
To transfer such guarantees back to the original nonsmooth objective, 
an additional connection between their stationarity measures is required. 
Fortunately, \citet{Kornowski-JMLR24} have established the following lemma.
\begin{lem}[Lemma 4 of \citet{Kornowski-JMLR24}]
\label{lem:smoothing-goldstein}
Suppose that $f(\cdot):\R^d\to\R$ is Lipschitz. Then, for any
$\delta,r\geq0$ and $\x\in\R^d$, its smoothed version defined in
\eqref{eq:smoothing-definition}, with the convention $f_0=f$, satisfies
$\partial_r f_\delta(\x)\subseteq\partial_{r+\delta}f(\x)$.
\end{lem}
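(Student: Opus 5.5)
The approach is to pass from $f_\delta$ back to $f$ through the Clarke subdifferentials at individual points, and then take convex hulls. Concretely, fix $\x$ and $\delta,r\geq 0$. If $\delta=0$, then $f_\delta=f$ by convention and the claim $\partial_r f(\x)\subseteq\partial_r f(\x)$ is trivial, so assume $\delta>0$. Since $\partial_{r+\delta}f(\x)$ is convex and $\partial_r f_\delta(\x)$ is the convex hull of $\cup_{\|\y-\x\|\leq r}\partial f_\delta(\y)$, it suffices to show
\[
\partial f_\delta(\y)\subseteq \partial_\delta f(\y) \quad\text{for every } \y.
\]
Indeed, $\partial_\delta f(\y)$ is the convex hull of subgradients of $f$ at points $\z$ with $\|\z-\y\|\leq\delta$. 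For $\|\y-\x\|\leq r$ we get $\|\z-\x\|\leq r+\delta$, so $\partial_\delta f(\y)\subseteq\partial_{r+\delta}f(\x)$.

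To prove the pointwise inclusion, I use the fact that $f_\delta$ is $C^1$ when $\delta>0$. By Lemma~\ref{lem:smoothing}(iii) its gradient is Lipschitz, so $\partial f_\delta(\y)=\{\nabla f_\delta(\y)\}$. Since $f$ is Lipschitz, Rademacher's theorem gives that $f$ is differentiable almost everywhere. Dominated convergence (difference quotients bounded by $L$) then lets me exchange differentiation and expectation:
\[
\nabla f_\delta(\y)=\E_{\u\sim\operatorname{Unif}(\mathcal{B}^d)}[\nabla f(\y+\delta\u)].
\]
More carefully, this holds because $\u\mapsto \y+\delta\u$ has a density, so the nondifferentiability set has measure zero almost surely. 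For each such $\u$, $\nabla f(\y+\delta\u)\in\partial f(\y+\delta\u)\subseteq\partial_\delta f(\y)$.

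The main obstacle is the last step: concluding that an expectation of elements of $\partial_\delta f(\y)$ lies in $\partial_\delta f(\y)$. This is the step I expect to need the most care, and the key fact is that $\partial_\delta f(\y)$ is convex and closed. It is compact, because $\cup_{\|\z-\y\|\leq\delta}\partial f(\z)$ is compact by the outer semicontinuity and local boundedness of the Clarke subdifferential, and the convex hull of a compact set in $\R^d$ is compact. For a closed convex set $C$, an integrable random vector supported in $C$ has its mean in $C$; otherwise a separating hyperplane gives a contradiction. Applying this with $C=\partial_\delta f(\y)$ gives $\nabla f_\delta(\y)\in\partial_\delta f(\y)$, which completes the chain of inclusions.
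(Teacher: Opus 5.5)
Your proof is correct. The paper gives no proof of this lemma and imports it from \citet{Kornowski-JMLR24}; your route is essentially the standard argument there: write $\nabla f_\delta(\y)=\E[\nabla f(\y+\delta\u)]$ via Rademacher and dominated convergence, place this mean in the closed convex set $\partial_\delta f(\y)\subseteq\partial_{r+\delta}f(\x)$, and take convex hulls over $\|\y-\x\|\leq r$. Your explicit check that $\partial_\delta f(\y)$ is compact, so that the expectation stays inside it, is a welcome bit of extra care.
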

As discussed in \citet{Sahinoglu-ICML24}, for any $\delta>0$ and
$a\in(0,1)$, Lemma \ref{lem:smoothing-goldstein} can be simply used to derive that
\begin{equation}
    \|\nabla f(\x)\|_\delta
    \leq \|\nabla f_{a\delta}(\x)\|_{(1-a)\delta},\forall\x\in\R^d.
    \label{eq:smoothing-stationarity}
\end{equation}
Consequently, any $((1-a)\delta,\epsilon)$-Goldstein stationary point of $f_{a\delta}(\cdot)$ is
also an $(\delta,\epsilon)$-Goldstein stationary point of $f(\cdot)$. For simplicity, we set $a=1/2$ throughout the paper. 
Therefore, it suffices to find an $(\delta/2,\epsilon)$-Goldstein stationary point of the $(\delta/2)$-smoothed objective.

\subsection{Decentralized Online Convex Optimization (D-OCO)}
D-OCO is formulated as a collaborative game between $n$ decentralized nodes and an adversary, and
has been extensively studied in the literature
\citep{DAOL_TKDE,D-OMD-TAC17,D-OCO-Survey,wan2024nearly,wan2024optimal}.
Specifically, at each round $t\in[T]$, each node $i$ first selects a decision
$\mathbf{x}_{t,i}\in\mathcal{K}$, where $\mathcal{K}\subseteq\R^d$ is a convex set, and then
observes a convex local loss $\ell_{t,i}:\mathcal{K}\to\R$. Let
$\ell_t(\mathbf{x})=\sum_{j=1}^n\ell_{t,j}(\mathbf{x})$
denote the global loss at round $t$. The goal of each node $i$ is to minimize
its regret measured in terms of the global losses, i.e.,
\begin{equation*}
    \operatorname{Reg}_{T,i}
    =\sum_{t=1}^T\ell_t(\mathbf{x}_{t,i})
    -\min_{\mathbf{x}\in\mathcal{K}}\sum_{t=1}^T\ell_t(\mathbf{x}).
\end{equation*}
As in DSO, each node $i$ can communicate only with its immediate neighbors through gossip steps
\citep{Xiao-Gossip04}. Note that the D-OCO algorithm of \citet{wan2024nearly} has achieved a nearly
optimal regret bound together with an explicit upper bound on the consensus error, i.e.,
$\|\x_{t,i}-\bar{\x}_{t}\|$ for all $i\in[n]$, where
$\bar{\x}_t=(1/n)\sum_{j=1}^n\x_{t,j}$. Both guarantees will be used in our decentralized
online-to-nonconvex conversion.

\section{Our Decentralized Online-to-Nonconvex Conversion}
In this section, we first introduce the procedure of our decentralized online-to-nonconvex conversion (D-ONC), 
and then show how to instantiate it with the D-OCO algorithm of \citet{wan2024nearly} to obtain dimension-free sample 
and communication complexities. Omitted proofs can be found in the appendix.

\subsection{Detailed Procedure}
To aid understanding, we begin with the special case of a single node, 
which can be handled directly by the online-to-nonconvex conversion of \citet{Cutkosky2023}. 
Specifically, this conversion proceeds over $K$ epochs, each consisting of $T$ iterations. 
Let $\x_{T,1}^0=\ze\in\R^d$ denote an initial point. During each epoch $k$, it is natural to 
maintain the iterate $\x_{t,1}^k$ by setting $\x_{0,1}^k=\x_{T,1}^{k-1}$ and performing the 
update $\x_{t,1}^k=\x_{t-1,1}^k+\Delta_{t,1}^k$, 
where $\Delta_{t,1}^k$ is the update direction. Critically, \citet{Cutkosky2023} propose
to generate the update direction via an OCO algorithm. The key insight is that 
if $f_1(\cdot)$ is well-behaved, 
one can construct a stochastic gradient
$\g_{t,1}^k=\nabla F_1(\w_{t,1}^k;\xi_{t,1}^k)$ such that 
\begin{equation}
f_1(\x_{t,1}^k)-f_1(\x_{t-1,1}^k)=\E[\langle\g_{t,1}^k,\Delta_{t,1}^k\rangle]
\label{eq:loss_gap}
\end{equation}
by setting $\w_{t,1}^k=\x_{t-1,1}^k+s_{t,1}^k\Delta_{t,1}^k$, where $s_{t,1}^k\sim\operatorname{Unif}[0,1]$.
This gives rise to an OCO problem: $\Delta_{t,1}^k$ should incur a small linear loss
$\langle \g_{t,1}^k,\Delta_{t,1}^k\rangle$,
while $\g_{t,1}^k$ is revealed only after $\Delta_{t,1}^k$ has been chosen.
Thus, it is natural to generate $\{\Delta_{t,1}^k\}_{t\in[T]}$
by running an OCO algorithm on the sequence of linear losses
$\{
\ell_{t,1}^k(\x)
=
\langle \g_{t,1}^k,\x\rangle
\}_{t\in[T]}$. In addition, following \citet{Cutkosky2023}, the final solution should be $\w_1\sim\operatorname{Unif}\{\w_{1}^1,\ldots,\w_{1}^K\}$, where $\w_{1}^k=(1/T)\sum_{t=1}^T\w_{t,1}^k$ for each epoch $k$. 

Then, we extend the above construction to the general case with $n>1$ nodes. 
The key idea is conceptually simple: each node $i$ maintains 
its own local copies $\x_{t,i}^k$, $\Delta_{t,i}^k$, $\w_{t,i}^k$, 
and $\g_{t,i}^k$ of the corresponding quantities introduced above. 
Specifically, for any node $i\in[n]$, we initialize $\x_{T,i}^0=\ze\in\R^d$, 
and maintain the iterate $\x_{t,i}^k$ during each epoch $k\in[K]$ by setting
$\x_{0,i}^k=\x_{T,i}^{k-1}$ and performing the 
update $\x_{t,i}^k=\x_{t-1,i}^k+\Delta_{t,i}^k$ with some update direction $\Delta_{t,i}^k$. 
Note that although the update of $\x_{t,i}^k$ follows the same form as in the single-node case, 
the generation of $\Delta_{t,i}^k$ is fundamentally different due to the mismatch between 
the local objective $f_i(\x)$ and the global objective $f(\x)$. To correct this mismatch, 
we first introduce a virtual global variable $\bar{\x}_t^k=(1/n)\sum_{j=1}^n\x_{t,j}^k$ 
and exploit the Lipschitz continuity of each local objective $f_i$ to obtain
\begin{equation}
\begin{split}
f(\bar{\x}_t^k)-f(\bar{\x}_{t-1}^k)=&\frac{1}{n}\sum_{j=1}^n\left(f_j(\x_{t,j}^k)-f_j(\x_{t-1,j}^k)+f_j(\bar{\x}_t^k)-f_j(\x_{t,j}^k)+f_j(\x_{t-1,j}^k)-f_j(\bar{\x}_{t-1}^k)\right)\\
=&\frac{1}{n}\sum_{j=1}^n\E[\langle\g_{t,j}^k,\Delta_{t,j}^k\rangle]+O\left(\max_{j\in[n]}\|\x_{t,j}^k-\bar{\x}_t^k\|+\max_{j\in[n]}\|\x_{t-1,j}^k-\bar{\x}_{t-1}^k\|\right)
\end{split}
\label{eq:dec_loss_gap}
\end{equation}
as long as $\g_{t,i}^k$ satisfies the node-wise counterpart of \eqref{eq:loss_gap} for every $i\in[n]$ (i.e., replacing the index $1$ with $i$).
If only considering the first term on the right-hand side of \eqref{eq:dec_loss_gap}, one can simply run an
independent OCO algorithm at each node $i\in[n]$ to generate $\Delta_{t,i}^k$. 
However, controlling the second term on the right-hand side of \eqref{eq:dec_loss_gap} further requires sufficient
consensus among the local iterates, which cannot be guaranteed by independent OCO algorithms. To address this issue, we define another virtual global variable $\bar{\Delta}_{t}^k=(1/n)\sum_{j=1}^n\Delta_{t,j}^k$, 
and obtain
\begin{equation}
\begin{split}
\frac{1}{n}\sum_{j=1}^n\E[\langle\g_{t,j}^k,\Delta_{t,j}^k\rangle]=\frac{1}{n}\sum_{j=1}^n\E[\langle\g_{t,j}^k,\bar{\Delta}_{t}^k\rangle]+O\left(\max_{j\in[n]}\|\Delta_{t,j}^k-\bar{\Delta}_{t}^k\|\right).
\end{split}
\label{eq:local_to_global}
\end{equation}
This motivates us to generate $\{\Delta_{t,i}^k\}_{t\in[T],i\in[n]}$ by running a D-OCO algorithm on the sequence of local linear losses
$\{
\ell_{t,i}^k(\x)
=
\langle \g_{t,i}^k,\x\rangle
\}_{t\in[T],i\in[n]}$. Indeed, the two terms on the right-hand side of \eqref{eq:local_to_global} can be 
controlled by the regret and consensus error bounds of the D-OCO algorithm, respectively. More interestingly,
by combining this observation with the update rule of $\x_{t,i}^k$, we notice that the second term on the right-hand side of \eqref{eq:dec_loss_gap}
can be controlled simultaneously as long as the consensus error bound of the D-OCO algorithm is sufficiently small.

The remaining problem is how to construct the stochastic gradient $\g_{t,i}^k$. Inspired by the single-node case, a naive idea is to independently sample $s_{t,i}^k\sim\operatorname{Unif}[0,1]$
, and set $\w_{t,i}^k=\x_{t-1,i}^k+s_{t,i}^k\Delta_{t,i}^k$ and $\g_{t,i}^k=\nabla F_i(\w_{t,i}^k;\xi_{t,i}^k)$. 
Moreover, the final solution $\w_1$ in the single-node case can be naturally extended to $\bar{\w}\sim\operatorname{Unif}\{\bar{\w}^1,\ldots,\bar{\w}^K\}$,
where $\bar{\w}^k=(1/(Tn))\sum_{t=1}^T\sum_{i=1}^n\w_{t,i}^k$ for each epoch $k$. 
However, establishing $(\delta,\epsilon)$-Goldstein stationarity requires relating the
average of local gradients to gradients of the global objective
evaluated at common points across nodes.
Specifically, let $\bar{\w}_t^k=(1/n)\sum_{i=1}^n\w_{t,i}^k$.
In the analysis, we can directly control
$(1/(nT))\sum_{t=1}^T\sum_{i=1}^n\nabla f_i(\w_{t,i}^k)$,
whereas the relevant quantity for $(\delta,\epsilon)$-Goldstein stationarity is
$(1/T)\sum_{t=1}^T\nabla f(\bar{\w}_t^k)$,
provided that the points $\bar{\w}_t^k$ remain sufficiently close
to the output $\bar{\w}^k$.
These two averages can differ because the local gradients are
evaluated at different points.
For nonsmooth objectives, this discrepancy cannot generally be
controlled by the distances between these points.
We therefore apply randomized smoothing and carry out the analysis
with the smoothed objectives.
Their smoothness allows us to bound the corresponding gradient
discrepancy in terms of the consensus errors
$\|\w_{t,i}^k-\bar{\w}_t^k\|$. Now, a critical limitation of the previous naive idea becomes apparent: 
the independent samples $\{s_{t,i}^k\}_{i\in[n]}$ introduce an $O(1)$ term in the resulting consensus bound for $\{\w_{t,i}^k\}_{i\in[n]}$, 
even when the consensus errors of $\{\x_{t-1,i}^k\}_{i\in[n]}$ and  $\{\Delta_{t,i}^k\}_{i\in[n]}$ are arbitrarily small.  
To address this limitation, we instead use a shared random interpolation weight $s_t^k\sim\operatorname{Unif}[0,1]$ across all nodes,
and set $\w_{t,i}^k=\x_{t-1,i}^k+s_{t}^k\Delta_{t,i}^k$.
Despite its simplicity, this change allows the consensus error of
$\{\w_{t,i}^k\}_{i\in[n]}$ to be directly controlled by those of
$\{\x_{t-1,i}^k\}_{i\in[n]}$ and $\{\Delta_{t,i}^k\}_{i\in[n]}$, and hence
ultimately by the consensus error of the underlying D-OCO algorithm according
to the above discussions. In addition, to reduce the total sample complexity, 
we also incorporate the client sampling technique \citep{Chen-TMLR22,Chen-Arxiv-2601}, 
i.e., only one node $i_t^k\sim\operatorname{Unif}([n])$ is sampled at each iteration $t$ of epoch $k$ 
and queries its local stochastic gradient. To keep the unbiasedness of $\g_{t,i}^k$ (and recalling the randomized smoothing technique), 
we construct it as
\begin{equation*}
\g_{t,i}^k
=
\begin{cases}
n\nabla F_i\!\left(
\w_{t,i}^k+\delta^\prime \u_{t,i}^k;\xi_{t,i}^k
\right),
& i=i_t^k\\[1mm]
\ze,
& i\neq i_t^k
\end{cases}
\end{equation*}
where $\u_{t,i}^k\sim\operatorname{Unif}(\mathcal{B}^d)$ and $\delta^\prime=\delta/2$ is
the smoothing radius.

\begin{algorithm}[t]
\caption{Decentralized Online-to-Nonconvex Conversion (D-ONC)}
\label{alg:donc}
\begin{algorithmic}[1]
\STATE \textbf{Input:} Number of epochs $K$, epoch length $T$, stationarity radius $\delta$, and a D-OCO algorithm $\A$ with a decision set
$\mathcal{K}$
\STATE Set $\x_{T,i}^{0}=\ze$ for all $i\in[n]$
\FOR{$k=1,\ldots,K$}
  \STATE Restart $\A$ over $\mathcal{K}$ and set
  $\x_{0,i}^{k}=\x_{T,i}^{k-1}$ for all $i\in[n]$
  \FOR{$t=1,\ldots,T$}
    \STATE Obtain $\Delta_{t,i}^{k}$ from $\A$ and set
    $\x_{t,i}^{k}=\x_{t-1,i}^{k}+\Delta_{t,i}^{k}$ for all $i\in[n]$
    \STATE Draw shared $s_t^k\sim\operatorname{Unif}[0,1]$ and set
    $\w_{t,i}^{k}=\x_{t-1,i}^{k}+s_t^k\Delta_{t,i}^{k}$ for all $i\in[n]$
    \STATE Draw $i_t^k\sim\operatorname{Unif}([n])$,
    $\u_{t,i_t^k}^k\sim\operatorname{Unif}(\mathcal{B}^d)$, and $\xi_{t,i_t^k}^k$
    \STATE Set $\g_{t,i_t^k}^k=n\nabla F_{i_t^k}
    \left(\w_{t,i_t^k}^k+(\delta/2)\u_{t,i_t^k}^k;\xi_{t,i_t^k}^k\right)$ and
    $\g_{t,i}^k=\ze$ for all $i\neq i_t^k$
    \STATE Send $\ell_{t,i}^k(\x)=\langle\g_{t,i}^k,\x\rangle$ to the local node $i$ of
    $\A$, for all $i\in[n]$
  \ENDFOR
  \STATE Set $\bar{\w}^{k}=(nT)^{-1}\sum_{t=1}^T\sum_{i=1}^n\w_{t,i}^{k}$
\ENDFOR
\RETURN $\bar{\w}\sim\operatorname{Unif}\{\bar{\w}^{1},\ldots,\bar{\w}^{K}\}$
\end{algorithmic}
\end{algorithm}

Based on the above discussions, the detailed procedure of our conversion is summarized in Algorithm \ref{alg:donc},
where the D-OCO algorithm is denoted as $\A$ and its decision set $\mathcal{K}$ will be specified later. 

\subsection{Theoretical Guarantees}
To demonstrate the power of our conversion, we first establish a general reduction guarantee
in terms of the regret and consensus error of the D-OCO algorithm. Let $\mathcal{R}_T$ denote a uniform deterministic upper bound on the expected regret
of all nodes over all epochs, and let $\mathcal{C}_{T,K}$ denote a uniform deterministic upper bound on 
the consensus error of all nodes over all iterations and epochs, i.e.,
\begin{equation}
\mathcal{R}_T\geq \max_{i,k}\E\left[\sum_{t=1}^T\sum_{j=1}^n\ell_{t,j}^k(\Delta_{t,i}^k)-\min_{\u\in\mathcal{K}}\sum_{t=1}^T\sum_{j=1}^n\ell_{t,j}^k(\u)\right],\quad \mathcal{C}_{T,K}\geq \max_{i,t,k}\E[\|\Delta_{t,i}^k-\bar{\Delta}_t^k\|].
\label{two-definitions}
\end{equation}
The final solution of our conversion satisfies the following theorem.
\begin{thm}
\label{thm:general-reduction}
Suppose that Assumptions~\ref{assum:lower-bounded}--\ref{assum:oracle} hold. Following the definitions in \eqref{two-definitions}, for any $\delta>0$, Algorithm~\ref{alg:donc} with
$\mathcal{K}=\{\x\in\R^d:\|\x\|\leq D\}$ and $D=\delta/(2T)$ satisfies
\begin{equation}
  \E\|\nabla f(\bar{\w})\|_{\delta}
  \leq
  \frac{\mathcal{R}_T}{nDT}
  +\frac{G}{\sqrt T}
  +\frac{2(\Delta_f+L\delta)}{\delta K}
  +\frac{L(K+1)\mathcal{C}_{T,K}}{D}
  +\frac{c\sqrt{d}L(KT+1)\mathcal{C}_{T,K}}{\delta}
  \label{eq:general-reduction}
\end{equation}
where $\Delta_f=f(\ze)-f^\star$ and $c>0$ is the numerical constant in
Lemma~\ref{lem:smoothing}. 
\end{thm}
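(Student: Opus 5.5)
We follow the analysis of \citet{Cutkosky2023}, working throughout with the smoothed objective $h=f_{\delta/2}$ and its components $h_i=(f_i)_{\delta/2}$, so that $h=(1/n)\sum_i h_i$. By \eqref{eq:smoothing-stationarity} with $a=1/2$, it suffices to bound $\E\|\nabla h(\bar{\w})\|_{\delta/2}$. By Lemma~\ref{lem:smoothing}, each $h_i$ is $L$-Lipschitz, differentiable, and has $(2c\sqrt d L/\delta)$-Lipschitz gradients.

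\textbf{Per-step descent identity.} Conditioning on the past, and using that $i_t^k$, $\u$ and $\xi$ are independent, we have
\[
\E[\g_{t,i}^k\mid\cdot]=\nabla h_i(\w_{t,i}^k).
\]
Because $s_t^k$ is uniform on $[0,1]$, the fundamental theorem of calculus gives
\[
h_i(\x_{t,i}^k)-h_i(\x_{t-1,i}^k)=\E\langle\nabla h_i(\w_{t,i}^k),\Delta_{t,i}^k\rangle .
\]
Following \eqref{eq:dec_loss_gap}--\eqref{eq:local_to_global} with $h$ in place of $f$, we replace $\Delta_{t,i}^k$ by $\bar\Delta_t^k$ and the local iterates by $\bar{\x}_t^k$. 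Since $\x^0=\ze$ is common to all nodes and the updates are additive, $\x_{t,i}^k-\bar{\x}_t^k=\sum(\Delta-\bar\Delta)$ over at most $KT$ steps, so $\E\|\x_{t,i}^k-\bar{\x}_t^k\|\le KT\,\mathcal{C}_{T,K}$. The errors to track are therefore:
\begin{itemize}
\item from the $\Delta$ swap, a term $\E\|\g\|\,\|\Delta-\bar\Delta\|$, which the boundedness of $\E\|\g\|$ controls by $O(L)$-type factors times $\mathcal C_{T,K}$;
\item from the iterate swap, a term of order $L\cdot KT\,\mathcal C_{T,K}$ per step.
\end{itemize}
I will arrange the bookkeeping so that, after dividing by $DT$, the total contributes $L(K+1)\mathcal C_{T,K}/D$. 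This probably requires charging the iterate error only at the epoch boundaries, telescoping within an epoch and picking up one $\mathcal C$ term per step.

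\textbf{Regret with the comparator.} In epoch $k$, choose the comparator
\[
\u^k=-D\,\frac{\sum_t\bar{\g}_t^k}{\|\sum_t\bar{\g}_t^k\|},\qquad \bar{\g}_t^k=\sum_j\g_{t,j}^k .
\]
The regret definition then gives
\[
\sum_t\sum_j\langle\g_{t,j}^k,\Delta_{t,i}^k\rangle\le \mathcal R_T-DT\,\Bigl\|\frac1T\sum_t\bar{\g}_t^k\Bigr\|
\]
in expectation. Dividing by $n$ and summing over $t$ and $k$ telescopes $h(\bar{\x})$ from $h(\ze)$ to something no smaller than $f^\star-L\delta/2$. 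Rearranging yields
\[
\frac1K\sum_k\E\Bigl\|\frac1{nT}\sum_t\bar{\g}_t^k\Bigr\|\le \frac{\mathcal R_T}{nDT}+\frac{\Delta_f+L\delta}{DTK}+\text{consensus terms}.
\]
With $DT=\delta/2$, the middle term becomes $2(\Delta_f+L\delta)/(\delta K)$.

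\textbf{From stochastic to true gradients at common points.} The terms $(1/n)\g$ are martingale differences around $\nabla h(\cdot)$-type means. Their second moment is at most $n\cdot G^2/n$ per step after the $1/n$ scaling, which gives a variance term $G/\sqrt T$. This replaces $\frac1{nT}\sum_t\bar{\g}_t^k$ by $\frac1T\sum_t\frac1n\sum_i\nabla h_i(\w_{t,i}^k)$. Next, I replace $\nabla h_i(\w_{t,i}^k)$ by $\nabla h_i(\bar{\w}_t^k)$. Smoothness bounds the error by $(2c\sqrt dL/\delta)\,\E\|\w_{t,i}^k-\bar{\w}_t^k\|$. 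The shared $s_t^k$ gives
\[
\w_{t,i}^k-\bar{\w}_t^k=(\x_{t-1,i}^k-\bar{\x}_{t-1}^k)+s_t^k(\Delta_{t,i}^k-\bar\Delta_t^k),
\]
whose expected norm is at most $(KT+1)\mathcal C_{T,K}$ (up to the constant $2$, absorbed into $c$ or halved carefully). This step is the source of the $c\sqrt dL(KT+1)\mathcal C_{T,K}/\delta$ term.

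\textbf{Goldstein membership.} Finally, since $\|\bar{\w}_t^k-\bar{\w}^k\|\le\sum_t\|\bar\Delta_t^k\|\le DT=\delta/2$, every $\nabla h(\bar{\w}_t^k)$ lies in $\partial_{\delta/2}h(\bar{\w}^k)$. Hence so does their average, and $\|\nabla h(\bar{\w}^k)\|_{\delta/2}\le\|\frac1T\sum_t\nabla h(\bar{\w}_t^k)\|$. Averaging over the uniform choice of $k$ completes the proof.

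The main obstacle is the bookkeeping in the first step. The crude iterate bound $KT\,\mathcal C$ multiplied by $L$ and divided by $DT$ would give $LK\mathcal C/D$ only if each epoch contributes the iterate error once rather than $T$ times. I would therefore telescope $h(\bar{\x}_t)$ exactly and compare $\frac1n\sum_i h_i(\x_{t,i})$ with $h(\bar{\x}_t)$ only at the endpoints of each epoch, with the within-epoch per-step discrepancy handled through the $\Delta$ swap.
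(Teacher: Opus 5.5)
Your outline follows the paper's proof: smoothing at radius $\delta/2$, the fundamental-theorem-of-calculus identity made exact by the shared $s_t^k$, the swap $\Delta_{t,i}^k\to\bar\Delta_t^k$ followed by node-averaged regret against a normalized comparator, a $G/\sqrt T$ martingale term, smoothness to move the gradients to $\bar\w_t^k$, and Goldstein membership from $\|\bar\w_t^k-\bar\w^k\|\le TD=\delta/2$. Building the comparator from the stochastic gradients and paying $G/\sqrt T$ afterwards by the triangle inequality is an equivalent variant of the paper's $\v^k$, which is built from $\nabla f_{i,\delta'}(\w_{t,i}^k)$.

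The gap is the bookkeeping you flagged: the uniform bound $\E\|\x_{t,i}^k-\bar\x_t^k\|\le KT\,\mathcal{C}_{T,K}$ is too crude to give the stated constants. Suppose you compare $\frac1n\sum_i h_i(\x_{T,i}^k)$ with $h(\bar\x_T^k)$ at both ends of every epoch. Then each boundary $\x_{T,\cdot}^k$ with $k<K$ is charged twice, and the iterate terms sum to $L(2K-1)KT\,\mathcal{C}_{T,K}$. After averaging over $k$, adding the swap term $LT\,\mathcal{C}_{T,K}$, and dividing by $DT$, you get $2KL\,\mathcal{C}_{T,K}/D$, which exceeds $(K+1)L\,\mathcal{C}_{T,K}/D$ for $K\ge2$. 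Likewise, in the smoothness step, $(KT+1)\mathcal{C}_{T,K}$ times the gradient-Lipschitz constant $c\sqrt dL/\delta'=2c\sqrt dL/\delta$ gives twice the last term of \eqref{eq:general-reduction}. The factor $2$ cannot be absorbed into $c$, which is fixed by Lemma~\ref{lem:smoothing}.

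The fix is the step-indexed bound $\E\|\x_{t,i}^k-\bar\x_t^k\|\le((k-1)T+t)\,\mathcal{C}_{T,K}$ (the paper's Lemma~\ref{lem:consensus-inheritance}). Your own telescoping argument gives it once you count the steps exactly. The endpoint terms then sum to $\sum_{k}(kT+(k-1)T)\mathcal{C}_{T,K}=K^2T\,\mathcal{C}_{T,K}$, i.e.\ $LKT\,\mathcal{C}_{T,K}$ after averaging. In the smoothness step, the average $\frac1{KT}\sum_{k,t}((k-1)T+t)=(KT+1)/2$ cancels the $2$ coming from $1/\delta'$.

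For the iterate term there is an even cheaper alternative. The per-node identities $\sum_t\E\langle\g_{t,i}^k,\Delta_{t,i}^k\rangle=\E[h_i(\x_{T,i}^k)-h_i(\x_{T,i}^{k-1})]$ already telescope across epochs. So you need to pass to $h(\bar\x_T^K)$ only once, at the very end, to use $f^\star$. This costs $LKT\,\mathcal{C}_{T,K}$ in total and yields $2L\,\mathcal{C}_{T,K}/D\le(K+1)L\,\mathcal{C}_{T,K}/D$ even with your crude bound.

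Finally, to get the coefficient $L$ (not $G$) in the swap term, condition on the past and $s_t^k$. Then replace $\g_{t,i}^k$ by its conditional mean $\nabla h_i(\w_{t,i}^k)$, whose norm is at most $L$, before applying Cauchy--Schwarz. Bounding $\E\|\g_{t,i}^k\|$ via Assumption~\ref{assum:oracle} would only give $G$.
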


\textbf{Remark.}
Although previous studies have already provided decentralized online-to-nonconvex conversions, their algorithms and analyses are limited to specific choices of D-OCO algorithms. 
To the best of our knowledge, Theorem \ref{thm:general-reduction} provides the first general decentralized online-to-nonconvex conversion that can be instantiated with any D-OCO algorithm satisfying the required regret and consensus guarantees. 
In fact, Theorem \ref{thm:general-reduction} reduces all challenges introduced by decentralization to the properties of the underlying D-OCO algorithm. 
More importantly, the problem dimension $d$ only appears in the last term of the above bound and is multiplied by the consensus error bound $\mathcal{C}_{T,K}$. 
This allows us to remove the polynomial dependence on $d$ by sufficiently tightening the consensus error of the underlying D-OCO algorithm.

Now, we proceed to instantiate our conversion with the D-OCO algorithm of
\citet{wan2024nearly}. It is called accelerated decentralized
follow-the-regularized-leader (AD-FTRL), and its detailed procedure is summarized
in Algorithm~\ref{alg:dftrl}, where the superscript $k$ in the output
$\Delta_{t,i}$ and feedback $\g_{t,i}$ is omitted for brevity.
The key idea of this algorithm is to approximate the cumulative average gradient
via multiple accelerated gossip steps per round \citep{Acc_Gossip}, i.e., by
maintaining $\mathbf{q}_{t,i}$, and generate each local decision by applying the
classical follow-the-regularized-leader (FTRL) algorithm
\citep{hazan2016introduction} with the approximated gradient information, i.e.,
Step~$4$ of Algorithm~\ref{alg:dftrl}. Moreover, there exists a minor yet important difference between
Algorithm~\ref{alg:dftrl} and the version originally provided by
\citet{wan2024nearly}. To be precise, in the standard D-OCO setting, the number
of communication steps is commonly restricted to one per round. To implement
accelerated gossip under this communication constraint,
\citet{wan2024nearly} further employ a blocking update mechanism, i.e., local
decisions are updated only once every several rounds. As a result, both the
regret and consensus error bounds have a polynomial dependence on
$\gamma^{-1}$, which is undesirable for our application. Fortunately, the DSO
setting does not impose such a communication constraint. Therefore, we remove
the blocking update mechanism and establish the following guarantee.
\begin{algorithm}[t]
\caption{AD-FTRL}
\label{alg:dftrl}
\begin{algorithmic}[1]
\STATE \textbf{Input:} Decision set $\mathcal{K}$, learning rate $\eta$,
gossip matrix $P$, gossip rounds $R$, and mixing ratio $\theta$
\STATE Initialize
$\mathbf{q}_{0,i}=\mathbf{h}_{0,i}=\ze$ for every $i\in[n]$
\FOR{$t=1,\ldots,T$}
  \STATE Output
  $\Delta_{t,i}=\argmin_{\x\in\mathcal{K}}
  \{\langle\mathbf{q}_{t-1,i},\x\rangle+\|\x\|^2/(2\eta)\}$
  \STATE Receive $\g_{t,i}$ and set
  $\mathbf{q}_{t,i}^{0}=\mathbf{q}_{t-1,i}+\g_{t,i}$ and
  $\mathbf{q}_{t,i}^{-1}=\mathbf{h}_{t-1,i}+\g_{t,i}$ for all $i\in[n]$
  \FOR{$r=0,\ldots,R-1$}
    \STATE Set $\mathbf{q}_{t,i}^{r+1}
    =(1+\theta)\sum_{j=1}^nP_{ij}\mathbf{q}_{t,j}^{r}
    -\theta\mathbf{q}_{t,i}^{r-1}$ for all $i\in[n]$
  \ENDFOR
  \STATE Set $\mathbf{q}_{t,i}=\mathbf{q}_{t,i}^{R}$ and
  $\mathbf{h}_{t,i}=\mathbf{q}_{t,i}^{R-1}$ for all $i\in[n]$
\ENDFOR
\end{algorithmic}
\end{algorithm}

\begin{prop}
\label{prop:dftrl}
Suppose that Assumptions~\ref{assum:p} and \ref{assum:oracle} hold. For any $\zeta>0$, if Algorithm~\ref{alg:dftrl} invoked in Algorithm~\ref{alg:donc} is run with $\mathcal{K}=\{\x\in\R^d:\|\x\|\leq D\}$, $\eta=D/(G\sqrt T)$ and
\begin{equation}
  R=\left\lceil
  \frac{\sqrt2\log(1+2\sqrt{14}\,n/\min\{1,\zeta\sqrt T\})}
       {(\sqrt2-1)\sqrt\gamma}
  \right\rceil,\quad
  \theta=\frac{1}{1+\sqrt{1-\sigma_2^2(P)}}
  \label{eq:gossip-rounds}
\end{equation}
then the two bounds in \eqref{two-definitions} can be chosen as $\mathcal{R}_T=2nDG\sqrt T$ and $\mathcal{C}_{T,K}=\zeta D$.
\end{prop}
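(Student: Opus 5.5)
The plan is to compare AD-FTRL with an idealized centralized FTRL that sees the exact average cumulative gradient, and to show that accelerated gossip keeps each local $\mathbf{q}_{t,i}$ close to that average. Let $\bar{\mathbf{q}}_t=(1/n)\sum_j\mathbf{q}_{t,j}$.

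\textbf{Step 1: the averages are exact.} Because $P$ is doubly stochastic, the recursion in Step~7 of Algorithm~\ref{alg:dftrl} preserves averages. Initially $\bar{\mathbf{q}}^{0}_t=\bar{\mathbf{h}}_{t-1}+\bar{\g}_t$ and $\bar{\mathbf{q}}^{-1}_t=\bar{\mathbf{q}}_{t-1}+\bar{\g}_t$. One checks inductively that $\bar{\mathbf{q}}_t=\bar{\mathbf{h}}_t=\sum_{\tau\le t}\bar{\g}_\tau$, where $\bar{\g}_\tau=(1/n)\sum_j\g_{\tau,j}$.

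\textbf{Step 2: bound the gossip error.} Stack the deviations from the average, $Q_t^{r}-\mathbf{1}\bar{\mathbf{q}}_t^\top$. The standard accelerated-gossip analysis \citep{Acc_Gossip}, which uses $P\succeq0$ and $\theta$ as chosen, gives a contraction factor of roughly $\sqrt{2}\,(1-(1-1/\sqrt2)\sqrt\gamma)^{R}$ per block of $R$ steps. This yields a bound of the form
\[
\|\mathbf{q}_{t,i}-\bar{\mathbf{q}}_t\|\le \rho\sum_{\tau\le t}\rho^{\,t-\tau}\cdot c_0\sqrt n\,\|\g_\tau\text{-deviations}\|, \qquad \rho\approx\sqrt{14}\,e^{-(\sqrt2-1)\sqrt\gamma R/\sqrt2}.
\]
Both the current and the previous iterates ($\mathbf{q}$ and $\mathbf{h}$) enter this recursion, and I expect that to be the source of the $\sqrt{14}$. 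This is a geometric series, and $\rho\le 1/2$, so it is bounded by $2\rho$ times the maximal deviation. Each $\g_{t,i}$ is either $\ze$ or $n\nabla F$, so $\E\|\g_{t,i}\|^2\le nG^2$ and the stacked norm has expectation at most $nG$. With the stated $R$ we get $\rho\le \min\{1,\zeta\sqrt T\}/(2n)$, and hence $\E\|\mathbf{q}_{t,i}-\bar{\mathbf{q}}_t\|\lesssim \zeta\sqrt T\,G$.

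\textbf{Step 3: consensus error.} The map $\mathbf{q}\mapsto\argmin_{\x\in\mathcal{K}}\{\langle\mathbf{q},\x\rangle+\|\x\|^2/(2\eta)\}$ is the projection of $-\eta\mathbf{q}$ onto the ball, so it is $\eta$-Lipschitz. Therefore
\[
\E\|\Delta_{t,i}-\bar\Delta_t\|\le 2\eta\max_j\E\|\mathbf{q}_{t-1,j}-\bar{\mathbf{q}}_{t-1}\|\le 2\eta\cdot\zeta\sqrt T\,G/2=\zeta D,
\]
using $\eta=D/(G\sqrt T)$. The constants will be fixed so that this lands exactly on $\zeta D$; this is where the $2\sqrt{14}$ and the $\min$ enter.

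\textbf{Step 4: regret.} Let $\y_t=\argmin_{\x\in\mathcal{K}}\{\langle\bar{\mathbf{q}}_{t-1},\x\rangle+\|\x\|^2/(2\eta)\}$ be the ideal FTRL iterate on the losses $n\langle\bar{\g}_t,\cdot\rangle$. Standard FTRL analysis gives regret at most
\[
\frac{D^2}{2\eta}+\frac{\eta}{2}\sum_t\E\|n\bar{\g}_t\|^2 \le \frac{D^2}{2\eta}+\frac{\eta n^2G^2T}{2},
\]
since $n\bar{\g}_t$ is a single vector with second moment at most $n^2G^2$. This is too big by a factor of $n$. Instead I will run FTRL on the normalized losses $\langle\bar{\g}_t,\cdot\rangle$, which have second moment at most $G^2$, and multiply by $n$. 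With $\eta=D/(G\sqrt T)$ this gives $nDG\sqrt T$. The deviation term is
\[
\sum_t\E\langle n\bar{\g}_t,\Delta_{t,i}-\y_t\rangle\le nTG\cdot\eta\,\zeta\sqrt T G\le nDG\sqrt T\,\zeta\sqrt T/\ldots,
\]
and here the factor $\min\{1,\zeta\sqrt T\}$ in $R$ should make this at most $nDG\sqrt T$ as well. Conditioning matters in this step: $\Delta_{t,i}$ is independent of $\g_t$ given the past, so the cross term can be handled by Cauchy--Schwarz conditionally. Adding the two pieces gives $\mathcal{R}_T=2nDG\sqrt T$.

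\textbf{Main obstacle.} The hard part is Step~2: getting the exact constant $2\sqrt{14}\,n$ and the $\sqrt{\gamma}$ rate from the accelerated-gossip contraction, while correctly accumulating errors through the two-term $(\mathbf{q},\mathbf{h})$ recursion across rounds. I would follow Lemma~1 of \citet{wan2024nearly} for the per-block contraction and then sum the geometric series myself.
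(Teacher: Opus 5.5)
\paragraph{Overall.} Your Steps 1, 3 and 4 are sound and match the paper. Your $\bar{\mathbf q}_t$ is the paper's $\mathbf s_t=\sum_{\tau\le t}\bar\g_\tau$, and the $2\eta$ triangle inequality through the ideal iterate $\y_t$ is the same. Running FTRL on the normalized losses $\langle\bar\g_t,\cdot\rangle$, with a conditional Cauchy--Schwarz bound for the cross term, is also exactly the paper's route. The gap is in Step 2, and it is a gap in mechanism, not only in constants.

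\paragraph{Why the per-block contraction fails.} The accelerated-gossip bound $\sqrt{14}\rho^{K}$, with $\rho=1-(1-1/\sqrt2)\sqrt\gamma$, holds for a run started from a \emph{restart} pair $\mathbf q^{-1}=\mathbf q^{0}$. At the start of round $t$ of Algorithm~\ref{alg:dftrl}, however, the accumulated state is $(\mathbf q_{t-1,i}+\g_{t,i},\,\mathbf h_{t-1,i}+\g_{t,i})$ with $\mathbf q_{t-1,i}\neq\mathbf h_{t-1,i}$. On such general pairs, the $R$-step map is not bounded by a $\gamma$-free multiple of $\beta=\rho^R$. Near the eigenvalue $\sigma_2(P)$ the $2\times2$ iteration matrix is nearly defective, which produces an extra factor growing like $1/\sqrt\gamma$ as $\gamma\to0$. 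The prescribed $R$ contains only $\log(n/\min\{1,\zeta\sqrt T\})$ and no $\log(1/\gamma)$ to absorb this factor. So the per-block contraction $\rho\approx\sqrt{14}\beta$ that you compound into a geometric series is not available.

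\paragraph{The constants also miss.} Even granting that contraction, bounding $\sum_{k\ge1}\rho^k$ by $2\rho$ with $\rho\le\min\{1,\zeta\sqrt T\}/(2n)$ only gives $\E\|\Delta_{t,i}-\bar\Delta_t\|\le 2\min\{1,\zeta\sqrt T\}D/\sqrt T$, which can be as large as $2\zeta D$. Since $R$ is fixed by the statement, you cannot adjust the constants afterwards.

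\paragraph{The missing idea: linear superposition.} Each $\g_{\tau,i}$ is added to both $\mathbf q^0_{\tau,i}$ and $\mathbf q^{-1}_{\tau,i}$, and $\mathbf h$ carries $\mathbf q^{R-1}$ into the next round. Therefore the deviation $\mathbf q_{t-1,i}-\mathbf s_{t-1}$ is a sum over $\tau<t$ of the outputs of $(t-\tau)R$ \emph{uninterrupted} accelerated-gossip steps. Each of these runs starts from the restart pair $(\g_\tau,\g_\tau)$, with the average removed. The single-run bound then applies to every summand and gives
\[
\|\mathbf q_{t-1,i}-\mathbf s_{t-1}\|\le\sqrt{14}\sum_{\tau<t}\beta^{t-\tau}\Bigl(\sum_j\|\g_{\tau,j}\|^2\Bigr)^{1/2},
\]
with a single factor $\sqrt{14}$ and no compounding. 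This is the form the paper imports from the proof of Lemma~2 in \citet{wan2024nearly}. Jensen's inequality and $\E\|\g_{\tau,j}\|^2\le nG^2$ then yield the bound $\sqrt{14}\,nG\beta/(1-\beta)$. The stated $R$ is calibrated so that $\beta/(1-\beta)\le\min\{1,\zeta\sqrt T\}/(2\sqrt{14}n)$. This gives the consensus bound $\min\{1,\zeta\sqrt T\}D/\sqrt T\le\zeta D$, and a regret cross term of at most $nDG\sqrt T/2$. That cross term leaves exactly enough room for $\mathcal R_T=2nDG\sqrt T$.
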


\textbf{Remark.}
First, as discussed above, the regret and consensus error bounds of
Algorithm~\ref{alg:dftrl} do not depend on the spectral gap $\gamma$.
In contrast, it only affects the number of communication rounds in our application.
Second, the consensus error of Algorithm~\ref{alg:dftrl} can be
sufficiently tightened with only logarithmic additional communication.
This is an important property that has been overlooked by previous
decentralized online-to-nonconvex conversions.

By combining Theorem~\ref{thm:general-reduction} with Proposition~\ref{prop:dftrl}, we have the following corollary.
\begin{cor}
\label{thm:main-complexity}
Under Assumptions~\ref{assum:p}--\ref{assum:oracle}, for any $\delta,\epsilon\in(0,1]$, 
Algorithm~\ref{alg:donc} can ensure $\E\|\nabla f(\bar\w)\|_\delta\leq\epsilon$
by setting $T=\left\lceil\max\left\{1,64G^2\epsilon^{-2}\right\}\right\rceil$, $
  K=\left\lceil\max\left\{1,8(\Delta_f+L\delta)\delta^{-1}\epsilon^{-1}\right\}\right\rceil$, and invoking
Algorithm~\ref{alg:dftrl}  with the same input as in Proposition \ref{prop:dftrl}, where $\zeta=\min\{1/2,
  \epsilon/(4L(K+1)(1+c\sqrt d))\}$ and $D=\delta/(2T)$.
\label{eq:main-parameters-2}
Accordingly, $O(\delta^{-1}\epsilon^{-3})$ stochastic gradient queries and $\widetilde O(\gamma^{-1/2}\delta^{-1}\epsilon^{-3})$ communication rounds are required in total.
\end{cor}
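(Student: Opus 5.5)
The plan is to substitute the guarantees of Proposition~\ref{prop:dftrl} into the bound \eqref{eq:general-reduction} of Theorem~\ref{thm:general-reduction} and check that each of the resulting terms is at most $\epsilon/4$. With $\mathcal{R}_T=2nDG\sqrt T$, the first term becomes $2G/\sqrt T$, so together with the second term we get $3G/\sqrt T$. The choice $T\geq 64G^2\epsilon^{-2}$ gives $G/\sqrt T\leq\epsilon/8$, and hence the first two terms total at most $3\epsilon/8$. The third term is $2(\Delta_f+L\delta)/(\delta K)\leq\epsilon/4$ by the choice $K\geq 8(\Delta_f+L\delta)\delta^{-1}\epsilon^{-1}$.

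For the consensus terms we plug in $\mathcal{C}_{T,K}=\zeta D$ with $D=\delta/(2T)$.

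\begin{itemize}
\item The fourth term becomes $L(K+1)\zeta$.
\item The fifth term becomes $c\sqrt d L(KT+1)\zeta/(2T)\leq c\sqrt d L(K+1)\zeta/2$, using $T\geq1$.
\end{itemize}

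Their sum is therefore at most $L(K+1)(1+c\sqrt d)\zeta$. Since $\zeta\leq\epsilon/(4L(K+1)(1+c\sqrt d))$, this sum is at most $\epsilon/4$. Adding up all terms gives $3\epsilon/8+\epsilon/4+\epsilon/4<\epsilon$, which proves $\E\|\nabla f(\bar\w)\|_\delta\leq\epsilon$. It remains to check that the hypotheses of Proposition~\ref{prop:dftrl} hold; they do, since $\zeta>0$ and $\mathcal{K}$ and $\eta$ are chosen exactly as required there.

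For the complexity counts, each iteration queries exactly one stochastic gradient, at node $i_t^k$. The total number of samples is therefore $KT=O(\delta^{-1}\epsilon^{-1}\cdot\epsilon^{-2})=O(\delta^{-1}\epsilon^{-3})$, using $\delta,\epsilon\le 1$ so that the maxima are dominated by the $\epsilon$-dependent terms. Each iteration uses $R$ gossip rounds, so the communication is $KTR$. From \eqref{eq:gossip-rounds}, $R=O(\gamma^{-1/2}\log(n/\min\{1,\zeta\sqrt T\}))$. Because $1/\zeta=O(\max\{1,LK\sqrt d/\epsilon\})$, the logarithm is $O(\log(n d L K/(\epsilon\delta)))$, which is polylogarithmic and absorbed into $\widetilde O$. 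This yields $\widetilde O(\gamma^{-1/2}\delta^{-1}\epsilon^{-3})$ communication rounds.

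The main, though mild, obstacle is the fifth term: we must make sure that $(KT+1)/T$ is bounded by $K+1$, and that the $\sqrt d$ factor enters the final complexity only through $\log(1/\zeta)$. Everything else is direct arithmetic.
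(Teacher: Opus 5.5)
Your proposal is correct and follows essentially the same route as the paper. You substitute $\mathcal{R}_T=2nDG\sqrt T$ and $\mathcal{C}_{T,K}=\zeta D$ into the bound of Theorem~\ref{thm:general-reduction}, bound the terms by $3\epsilon/8$, $\epsilon/4$, $\epsilon/4$ using $(KT+1)/T\le K+1$, and read off $KT$ and $RKT$. The only cosmetic difference is that the paper uses $\sqrt T\ge 8G/\epsilon$ to obtain $1/(\zeta\sqrt T)=O(\sqrt d/(\delta\epsilon))$, whereas you drop $\sqrt T$ and get an extra $1/\epsilon$, which is harmless inside the logarithm.
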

\vspace{-1em}
\begin{proof}
Let $A=\Delta_f+L\delta$ for brevity. By combining \eqref{eq:general-reduction} with Proposition \ref{prop:dftrl} and $D=\delta/(2T)$, we have
\begin{equation}
  \E\|\nabla f(\bar\w)\|_\delta\leq \frac{3G}{\sqrt T}+\frac{2A}{\delta K}
    +L(K+1)\zeta+\frac{c\sqrt d L(KT+1)}{2T}\zeta.
    \label{eq:total-result}
\end{equation}
By the definitions of $T$ and $K$, we always have
\[
  T\geq1,\qquad T\geq\frac{64G^2}{\epsilon^2},\qquad
  K\geq1,\qquad K\geq\frac{8A}{\delta\epsilon}
\]
which implies that
\[
  \frac{3G}{\sqrt T}\leq\frac{3\epsilon}{8},
  \qquad \frac{2A}{\delta K}\leq\frac{\epsilon}{4}.
\]
In particular, if $64G^2/\epsilon^2\leq1$, then $T=1$ and
$G\leq\epsilon/8$, which gives the same first bound. If
$8A/(\delta\epsilon)\leq1$, then $K=1$ and
$2A/\delta\leq\epsilon/4$, which gives the same second bound.

For the remaining terms in \eqref{eq:total-result}, $T\geq1$ implies $(KT+1)/T\leq K+1$.
Moreover, the definition of $\zeta$ ensures
$\zeta\leq\epsilon/[4L(K+1)(1+c\sqrt d)]$. Therefore,
\begin{align*}
  L(K+1)\zeta+\frac{c\sqrt d L(KT+1)}{2T}\zeta
  &\leq L(K+1)(1+c\sqrt d)\zeta
  \leq\frac{\epsilon}{4}.
\end{align*}
By combining \eqref{eq:total-result} with the three bounds, we have
\[
  \E\|\nabla f(\bar\w)\|_\delta
  \leq\frac{3\epsilon}{8}+\frac{\epsilon}{4}
       +\frac{\epsilon}{4}
  =\frac{7\epsilon}{8}\leq\epsilon.
\]
To bound the total complexities, we notice that
\[
  T\leq1+\frac{64G^2}{\epsilon^2}
    \leq\frac{1+64G^2}{\epsilon^2},\qquad
  K\leq1+\frac{8A}{\delta\epsilon}
    \leq\frac{1+8(\Delta_f+L)}{\delta\epsilon}
\]
where we used $\delta,\epsilon\in(0,1]$. Moreover, due to $\sqrt T\geq1$ and $\sqrt T\geq8G/\epsilon$, we have
\begin{align*}
  \frac1{\zeta\sqrt T}
  &=\max\left\{\frac2{\sqrt T},
       \frac{4L(K+1)(1+c\sqrt d)}{\epsilon\sqrt T}\right\}\leq\max\left\{2,
       \frac{L(K+1)(1+c\sqrt d)}{2G}\right\}
   =O\!\left(\frac{\sqrt d}{\delta\epsilon}\right).
\end{align*}
By combining this inequality with 
\eqref{eq:gossip-rounds}, we have $R=O(\gamma^{-1/2}
       \log(1+n\sqrt d\delta^{-1}\epsilon^{-1}))$. Finally, it is easy to verify that the algorithm uses $KT=O(\delta^{-1}\epsilon^{-3})$ stochastic gradient queries and $RKT=\widetilde O(\gamma^{-1/2}\delta^{-1}\epsilon^{-3})$ communication
rounds. 
\end{proof}
\vspace{-1em}
\textbf{Remark.} From Corollary \ref{thm:main-complexity}, we have established the first dimension-free  $O(\delta^{-1}\epsilon^{-3})$
sample complexity and the first nearly dimension-free $\widetilde O(\gamma^{-1/2}\delta^{-1}\epsilon^{-3})$ communication complexity for finding
an $(\delta,\epsilon)$-Goldstein stationary point of DSO with nonsmooth nonconvex objectives in expectation.
Moreover, the $O(\delta^{-1}\epsilon^{-3})$
sample complexity has been proved to be optimal even in the non-distributed setting \citep{Cutkosky2023}.

\section{Conclusion and Future Work}
This paper proposes a decentralized online-to-nonconvex conversion for
nonsmooth nonconvex stochastic optimization. Although it builds
on existing techniques, including randomized smoothing and client sampling,
its key property is that consensus among both the local iterates and the
random interpolation points can be inherited from the update directions
generated by a D-OCO algorithm, with a shared random interpolation weight
ensuring consensus among the interpolation points. This yields a general
reduction guarantee in which the dimension-dependent term is controlled
by the consensus error of the D-OCO subroutine. By instantiating the
conversion with AD-FTRL, an existing D-OCO algorithm whose accelerated
gossip steps sufficiently reduce this error, we obtain
$O(\delta^{-1}\epsilon^{-3})$ sample complexity and
$\widetilde{O}(\gamma^{-1/2}\delta^{-1}\epsilon^{-3})$ communication
complexity for finding an $(\delta,\epsilon)$-Goldstein stationary point
in expectation. These results eliminate the polynomial dimension
dependence in existing decentralized first-order guarantees, with only
logarithmic dependence on the dimension in the communication cost.
Nonetheless, several questions remain open. First, it is unclear
whether our communication complexity is nearly optimal. Second, it may
be possible to generalize our conversion to the zeroth-order setting
and achieve similar improvements.


\bibliography{iclr2027_conference}
\bibliographystyle{iclr2027_conference}

\appendix
\section{Proof of Theorem~\ref{thm:general-reduction}}
Since we have used the randomized smoothing technique, according to \eqref{eq:smoothing-definition},  we first define the smoothed versions of the global objective $f(\x)$ and $\{f_i(\x)\}_{i\in[n]}$ as follows
\begin{equation}
f_{i,\delta'}(\x)=\E_{\u\sim\operatorname{Unif}(\mathcal{B}^d)}[f_i(\x+\delta^\prime \u)],\forall i\in[n],\quad f_{\delta^\prime }(\x)=\frac{1}{n}\sum_{i=1}^nf_{i,\delta^\prime}(\x)
\label{eq:smoothed-versions}
\end{equation}
where $\delta'=\delta/2$. By combining the $L$-Lipschitzness of each $f_i(\x)$ derived from Assumption~\ref{assum:lip} with Lemma~\ref{lem:smoothing},
we can derive three nice properties for each $f_{i,\delta'}(\x)$. From the second property of Lemma~\ref{lem:smoothing}, $f_{i,\delta'}(\x)$ is differentiable and $L$-Lipschitz. Then,
for any $i\in[n]$, $t\in[T]$, and $k\in[K]$, the fundamental theorem of calculus gives
\begin{equation}
\begin{split}
  f_{i,\delta^\prime}(\x_{t,i}^k)-f_{i,\delta^\prime}(\x_{t-1,i}^k)&=\int_0^1\left\langle
  \nabla f_{i,\delta^\prime}(\x_{t-1,i}^k+s(\x_{t,i}^k-\x_{t-1,i}^k)),
  \x_{t,i}^k-\x_{t-1,i}^k\right\rangle\,ds\\
  &=\int_0^1\left\langle
  \nabla f_{i,\delta^\prime}(\x_{t-1,i}^k+s\Delta_{t,i}^k),
  \Delta_{t,i}^k\right\rangle\,ds
\end{split}
\label{eq:well-behaved}
\end{equation} 
where the second equality is due to the update rule $\x_{t,i}^{k}=\x_{t-1,i}^{k}+\Delta_{t,i}^{k}$. 

Moreover, according to the construction of each $\g_{t,i}^k$ and Assumption \ref{assum:oracle}, it is easy to verify that
\begin{equation}
\E[\langle\g_{t,i}^k, \Delta_{t,i}^k\rangle]=\E[\langle \nabla f_i(\w_{t,i}^k+\delta'\u_{t,i}^k), \Delta_{t,i}^k\rangle]=\E[\langle \nabla f_{i,\delta'}(\x_{t-1,i}^k+s_t^k\Delta_{t,i}^k), \Delta_{t,i}^k\rangle]
\label{eq:unbiased}
\end{equation}
where the second equality is due to the definition in \eqref{eq:smoothed-versions} and $\w_{t,i}^{k}=\x_{t-1,i}^{k}+s_t^k\Delta_{t,i}^{k}$.
By combining \eqref{eq:well-behaved}, \eqref{eq:unbiased}, and $s_t^k\sim\operatorname{Unif}[0,1]$, we have
\begin{equation}
  \E[\langle\g_{t,i}^k, \Delta_{t,i}^k\rangle]=\E[f_{i,\delta^\prime}(\x_{t,i}^k)-f_{i,\delta^\prime}(\x_{t-1,i}^k)].
\label{eq:single-node-iterate}
\end{equation} 
Summing both sides of \eqref{eq:single-node-iterate} over $t\in[T]$, we have
\begin{equation}
\begin{split}
  \sum_{t=1}^T\E[\langle\g_{t,i}^k,\Delta_{t,i}^k\rangle]=\E\left[f_{i,\delta^\prime}(\x_{T,i}^k)
  -f_{i,\delta^\prime}(\x_{0,i}^k)\right]=\E\left[f_{i,\delta^\prime}(\x_{T,i}^k)
  -f_{i,\delta^\prime}(\x_{T,i}^{k-1})\right]
\end{split}  
\label{eq:random-interpolation}
\end{equation}
where the last equality is due to $\x_{0,i}^k=\x_{T,i}^{k-1}$.

Note that the above analysis is only a natural extension of the non-distributed analysis of \citet{Cutkosky2023} to each local node. 
Our main technical innovation is to establish a connection between these local guarantees and the global objective.
Specifically, by defining $\bar{\x}_t^k=(1/n)\sum_{j=1}^n\x_{t,j}^k$, the $L$-Lipschitzness of $f_{i,\delta'}(\x)$ yields
\begin{equation}
\begin{split}
&\E\left[f_{\delta'}(\bar\x_T^k)-f_{\delta'}(\bar\x_T^{k-1})\right]\\
=&\E\left[\frac{1}{n}\sum_{i=1}
^n\left(f_{i,\delta'}(\x_{T,i}^k)-f_{i,\delta'}(\x_{T,i}^{k-1})+f_{i,\delta'}(\bar\x_T^k)-f_{i,\delta'}(\x_{T,i}^k)+f_{i,\delta'}(\x_{T,i}^{k-1})-f_{i,\delta'}(\bar\x_T^{k-1})\right)\right]\\
\leq& \E\left[\frac{1}{n}\sum_{i=1}
^n\left(f_{i,\delta'}(\x_{T,i}^k)-f_{i,\delta'}(\x_{T,i}^{k-1})+L\|\bar\x_T^k-\x_{T,i}^{k}\|+L\|\bar\x_T^{k-1}-\x_{T,i}^{k-1}\|\right)\right]\\
=& \E\left[\frac{1}{n}\sum_{t=1}^T\sum_{i=1}
^n\langle\g_{t,i}^k,\Delta_{t,i}^k\rangle\right]+\E\left[\frac{L}{n}\sum_{i=1}
^n(\|\bar\x_T^k-\x_{T,i}^{k}\|+\|\bar\x_T^{k-1}-\x_{T,i}^{k-1}\|)\right]
\end{split}
\label{eq:epoch-global-decrease}
\end{equation}
where the last equality is due to \eqref{eq:random-interpolation}.

To bound the first term on the right-hand side of \eqref{eq:epoch-global-decrease}, we make the following decomposition
\begin{equation}
\E\left[\frac{1}{n}\sum_{t=1}^T\sum_{i=1}
^n\langle\g_{t,i}^k,\Delta_{t,i}^k\rangle\right]=\E\left[\frac{1}{n}\sum_{t=1}^T\sum_{i=1}
^n\langle\g_{t,i}^k,\Delta_{t,i}^k-\v^k\rangle\right]+\E\left[\frac{1}{n}\sum_{t=1}^T\sum_{i=1}
^n\langle\g_{t,i}^k,\v^k\rangle\right]
\label{eq:decomposition1}
\end{equation}
for any $\v^k\in\R^d$. With $\bar{\Delta}_{t}^k=(1/n)\sum_{j=1}^n\Delta_{t,j}^k$, we have
\begin{equation}
\E\left[\sum_{t=1}^T\sum_{i=1}
^n\langle\g_{t,i}^k,\Delta_{t,i}^k-\v^k\rangle\right]=\E\left[\sum_{t=1}^T\sum_{i=1}
^n\langle\g_{t,i}^k,\bar{\Delta}_{t}^k-\v^k\rangle\right]+\E\left[\sum_{t=1}^T\sum_{i=1}
^n\langle\g_{t,i}^k,\Delta_{t,i}^k-\bar{\Delta}_{t}^k\rangle\right].
\label{eq:decomposition2}
\end{equation}
If $\v^k\in\mathcal{K}=\{\x\in\R^d:\|\x\|\leq D\}$, we can utilize the regret bound of the D-OCO algorithm to derive that
\begin{equation}
\E\left[\sum_{t=1}^T\sum_{i=1}
^n\langle\g_{t,i}^k,\bar{\Delta}_{t}^k-\v^k\rangle\right]=\frac{1}{n}\sum_{j=1}^n\E\left[\sum_{t=1}^T\sum_{i=1}
^n\langle\g_{t,i}^k,\Delta_{t,j}^k-\v^k\rangle\right]\leq \mathcal{R}_T
\label{eq:decomposition3}
\end{equation}
where $\mathcal{R}_T$ is defined in \eqref{two-definitions}. 

By further recalling $\mathcal{C}_{T,K}$ is defined in \eqref{two-definitions}, we also utilize the consensus error bound of the D-OCO algorithm to derive that
\begin{equation}
\begin{split}
\E\left[\sum_{t=1}^T\sum_{i=1}^n
\langle\g_{t,i}^k,\Delta_{t,i}^k-\bar\Delta_t^k\rangle\right]=&\E\left[\sum_{t=1}^T\sum_{i=1}^n
\langle\nabla f_{i,\delta'}(\w_{t,i}^k),
\Delta_{t,i}^k-\bar\Delta_t^k\rangle\right]\\
\leq& L\sum_{t=1}^T\sum_{i=1}^n
\E[\|\Delta_{t,i}^k-\bar\Delta_t^k\|]
\leq nLT\mathcal{C}_{T,K}
\end{split}
\label{eq:decomposition4}
\end{equation}
where the first inequality is due to the $L$-Lipschitzness of $f_{i,\delta'}(\x)$.
By combining \eqref{eq:decomposition1}, \eqref{eq:decomposition2}, \eqref{eq:decomposition3}, and \eqref{eq:decomposition4}, for $\v^k\in\mathcal{K}$, we have
\begin{equation}
\begin{split}
&\E\left[\frac{1}{n}\sum_{t=1}^T\sum_{i=1}
^n\langle\g_{t,i}^k,\Delta_{t,i}^k\rangle\right]\leq \E\left[\frac{1}{n}\sum_{t=1}^T\sum_{i=1}
^n\langle\g_{t,i}^k,\v^k\rangle\right]+\frac{\mathcal{R}_T}{n}+LT\mathcal{C}_{T,K}\\
=& \E\left[\frac{1}{n}\sum_{t=1}^T\sum_{i=1}
^n\langle\nabla f_{i,\delta^\prime }(\w_{t,i}^k),\v^k\rangle\right]
+\E\left[\frac{1}{n}\sum_{t=1}^T\sum_{i=1}
^n\left\langle\g_{t,i}^k-\nabla f_{i,\delta^\prime }(\w_{t,i}^k),\v^k\right\rangle\right]+\frac{\mathcal{R}_T}{n}+LT\mathcal{C}_{T,K}.
\end{split}
\label{eq:decomposition1-2}
\end{equation}
By choosing
\begin{equation*}
  \v^k=-D\frac{\sum_{t=1}^T\sum_{i=1}^n
  \nabla f_{i,\delta'}(\w_{t,i}^k)}
  {\|\sum_{t=1}^T\sum_{i=1}^n\nabla f_{i,\delta'}(\w_{t,i}^k)\|}
\end{equation*}
with $\v^k=\ze$ if the denominator is zero, we have $\v^k\in\mathcal{K}$ and
\begin{equation}
  \frac1n\sum_{t=1}^T\sum_{i=1}^n
  \langle\nabla f_{i,\delta'}(\w_{t,i}^k),\v^k\rangle
  =-\frac Dn\left\|\sum_{t=1}^T\sum_{i=1}^n
  \nabla f_{i,\delta'}(\w_{t,i}^k)\right\|.
  \label{eq:decomposition2-2}
\end{equation}
Moreover, let $\z_t^k=\frac1n\sum_{i=1}^n(\g_{t,i}^k-\nabla f_{i,\delta'}(\w_{t,i}^k))$ for brevity. 
Due to
$\v^k\in\mathcal{K}$, it is not hard to verify that
\begin{equation}
\begin{split}
  \E\left[\sum_{t=1}^T
  \langle\z_t^k,\v^k\rangle\right] \leq& D\E\left\|\sum_{t=1}^T\z_t^k\right\|\leq D\left(\E\left\|\sum_{t=1}^T\z_t^k\right\|^2\right)^{1/2}=D\left(\sum_{t=1}^T\E\|\z_t^k\|^2\right)^{1/2}\\
  = &D\left(\sum_{t=1}^T\E\left[\left\|\frac1n\sum_i\g_{t,i}^k\right\|^2-\left\|\frac1n\sum_i\nabla f_{i,\delta'}(\w_{t,i}^k)\right\|^2\right]\right)^{1/2}\\
  \leq &D\left(\sum_{t=1}^T\E\left[\left\|\frac1n\g_{t,i_t^k}^k\right\|^2\right]\right)^{1/2}\leq DG\sqrt{T}
\end{split}
\label{eq:decomposition3-2}
\end{equation}
where the second inequality is due to Jensen's inequality, and the last inequality is due to the construction of $\g_{t,i_t^k}^k$ and  Assumption \ref{assum:oracle}. 

By combining \eqref{eq:decomposition1-2}, \eqref{eq:decomposition2-2}, and
\eqref{eq:decomposition3-2} with \eqref{eq:epoch-global-decrease}, we have
\begin{equation}
\begin{split}
  \frac Dn\E\left\|\sum_{t=1}^T\sum_{i=1}^n
  \nabla f_{i,\delta'}(\w_{t,i}^k)\right\|
  \leq&\frac{\mathcal{R}_T}{n}+DG\sqrt T+LT\mathcal{C}_{T,K}+\E[f_{\delta'}(\bar\x_T^{k-1})-f_{\delta'}(\bar\x_T^k)]\\
  &+\frac Ln\sum_{i=1}^n\E\left[
  \|\x_{T,i}^k-\bar\x_T^k\|
  +\|\x_{T,i}^{k-1}-\bar\x_T^{k-1}\|\right].
\end{split}
\label{eq:epoch-gradient-bound}
\end{equation}
Taking the average of both sides over $k\in[K]$, we have
\begin{equation}
\begin{split}
  \frac{D}{nK}\sum_{k=1}^K\E\left\|\sum_{t=1}^T\sum_{i=1}^n
  \nabla f_{i,\delta'}(\w_{t,i}^k)\right\|
  \leq&\frac{\mathcal{R}_T}{n}+DG\sqrt T+LT\mathcal{C}_{T,K}+\frac{1}{K}\E[f_{\delta'}(\bar\x_T^{0})-f_{\delta'}(\bar\x_T^K)]\\
  &+\frac{L}{Kn}\sum_{k=1}^K\sum_{i=1}^n\E\left[
  \|\x_{T,i}^k-\bar\x_T^k\|
  +\|\x_{T,i}^{k-1}-\bar\x_T^{k-1}\|\right].
\end{split}
\label{eq:sum-epoch-gradient-bound}
\end{equation}
Due to $\bar\x_T^0=\ze$ and the first property of Lemma~\ref{lem:smoothing}, we have
\begin{equation}
f_{\delta'}(\bar\x_T^{0})-f_{\delta'}(\bar\x_T^k)=f_{\delta'}(\ze)-f_{\delta'}(\bar\x_T^k)\leq f(\ze)-f(\bar\x_T^k)+2L\delta^\prime\leq \Delta_f+L\delta.
\label{eq:initial-gap}
\end{equation}
To bound $\|\x_{T,i}^k-\bar\x_T^k\|$, we exploit the consensus error bound of the D-OCO algorithm to establish the following lemma.
\begin{lem}
\label{lem:consensus-inheritance}
For every $i\in[n]$, $t\in[T]$, and $k\in[K]$, Algorithm \ref{alg:donc} ensures
\[
  \E\|\x_{t,i}^k-\bar{\x}_t^k\|
  \leq ((k-1)T+t)\mathcal{C}_{T,K}.
\]
\end{lem}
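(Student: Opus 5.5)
We can unroll the iterate recursion and apply the triangle inequality. Since every node starts from $\x_{T,i}^0=\ze$ and uses $\x_{0,i}^k=\x_{T,i}^{k-1}$ together with $\x_{t,i}^k=\x_{t-1,i}^k+\Delta_{t,i}^k$, a simple induction over epochs and iterations gives the closed form
\[
\x_{t,i}^k=\sum_{k'=1}^{k-1}\sum_{\tau=1}^{T}\Delta_{\tau,i}^{k'}+\sum_{\tau=1}^{t}\Delta_{\tau,i}^{k}.
\]
The same identity holds for the averages, $\bar{\x}_t^k$ with $\bar{\Delta}_\tau^{k'}$ in place of $\Delta_{\tau,i}^{k'}$. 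Averaging is linear, and $\bar{\x}_T^0=\ze$. Subtracting the two expressions gives
\[
\x_{t,i}^k-\bar{\x}_t^k=\sum_{k'=1}^{k-1}\sum_{\tau=1}^{T}\bigl(\Delta_{\tau,i}^{k'}-\bar{\Delta}_\tau^{k'}\bigr)+\sum_{\tau=1}^{t}\bigl(\Delta_{\tau,i}^{k}-\bar{\Delta}_\tau^{k}\bigr).
\]

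Next, we take norms and apply the triangle inequality. This bounds $\|\x_{t,i}^k-\bar{\x}_t^k\|$ by a sum of exactly $(k-1)T+t$ terms of the form $\|\Delta_{\tau,i}^{k'}-\bar{\Delta}_\tau^{k'}\|$. Taking expectations and using linearity, each term is at most $\max_{i,t,k}\E[\|\Delta_{t,i}^k-\bar\Delta_t^k\|]\le\mathcal{C}_{T,K}$ by the definition in \eqref{two-definitions}. This yields the claimed bound $((k-1)T+t)\mathcal{C}_{T,K}$.

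There is no real obstacle here. The only point to check is that the restart of $\A$ at each epoch does not reset the iterates: the epochs chain through $\x_{0,i}^k=\x_{T,i}^{k-1}$, so earlier epochs' direction errors accumulate. That accumulation is exactly what the $(k-1)T$ term accounts for. Also, $\mathcal{C}_{T,K}$ is uniform over all $t$ and $k$, so it applies across restarts. A formal induction on the pair $(k,t)$ in lexicographic order, with base case $\x_{T,i}^0-\bar\x_T^0=\ze$, can replace the closed form if preferred.
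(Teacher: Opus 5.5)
Your proof is correct and takes essentially the paper's route. The paper applies the one-step bound $\E\|\x_{t,i}^k-\bar\x_t^k\|\le\E\|\x_{t-1,i}^k-\bar\x_{t-1}^k\|+\E\|\Delta_{t,i}^k-\bar\Delta_t^k\|$ recursively within and across epochs from the common zero initialization; your closed form is the unrolled version of that recursion, and it bounds the same $(k-1)T+t$ direction-consensus terms by the uniform bound $\mathcal{C}_{T,K}$.
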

From Lemma \ref{lem:consensus-inheritance}, it is easy to verify that
\begin{equation}
\begin{split}
\sum_{k=1}^K\sum_{i=1}^n\E\left[
  \|\x_{T,i}^k-\bar\x_T^k\|
  +\|\x_{T,i}^{k-1}-\bar\x_T^{k-1}\|\right]\leq nT\sum_{k=1}^K(2k-1)\mathcal{C}_{T,K}=nTK^2\mathcal{C}_{T,K}.
\end{split}
\label{eq:iterate-gap-bound}
\end{equation}

By combining \eqref{eq:sum-epoch-gradient-bound}, \eqref{eq:initial-gap}, and \eqref{eq:iterate-gap-bound}, we have
\begin{equation}
\begin{split}
  \frac{D}{nK}\sum_{k=1}^K\E\left\|\sum_{t=1}^T\sum_{i=1}^n
  \nabla f_{i,\delta'}(\w_{t,i}^k)\right\|\leq&\frac{\mathcal{R}_T}{n}+DG\sqrt T+LT\mathcal{C}_{T,K}+\frac{\Delta_f+L\delta}{K}+LKT\mathcal{C}_{T,K}.
\end{split}
\label{eq:sum-epoch-gradient-bound-simplied}
\end{equation}
Finally, we introduce the following lemma, which relates the above bound on these local gradients to the stationarity of the original objective.
\begin{lem}
\label{transfer-gradient-point}
Suppose that Assumption \ref{assum:lip} holds. Following the definitions in \eqref{two-definitions}, for any $\delta>0$, Algorithm~\ref{alg:donc} with
$\mathcal{K}=\{\x\in\R^d:\|\x\|\leq D\}$ and $D=\delta/(2T)$ satisfies
\[
\E[\|\nabla f(\bar{\w})\|_{\delta}]\leq \frac{1}{K}\E\left[\sum_{k=1}^K\left\|\frac{1}{nT}\sum_{t=1}^T\sum_{i=1}^n
 \nabla f_{i,\delta'}(\w_{t,i}^k)\right\|\right]+\frac{c\sqrt{d}L(KT+1)\mathcal{C}_{T,K}}{\delta}
\]
where $\delta^\prime=\delta/2$, and $c>0$ is the numerical constant in
Lemma~\ref{lem:smoothing}.
\end{lem}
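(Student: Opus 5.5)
We prove Lemma~\ref{transfer-gradient-point} by working on each epoch $k$ separately and then averaging over $k$. The output $\bar\w$ is uniform over $\{\bar\w^k\}$, so $\E\|\nabla f(\bar\w)\|_\delta=\frac1K\sum_k\E\|\nabla f(\bar\w^k)\|_\delta$. By \eqref{eq:smoothing-stationarity} with $a=1/2$, it suffices to bound $\|\nabla f_{\delta'}(\bar\w^k)\|_{\delta'}$. Here $f_{\delta'}$ is the smoothed global objective in \eqref{eq:smoothed-versions}, which is differentiable. The plan is to exhibit an explicit element of $\partial_{\delta'}f_{\delta'}(\bar\w^k)$ and bound its norm, namely
\[
\g^k=\frac1T\sum_{t=1}^T\nabla f_{\delta'}(\bar\w_t^k),\qquad \bar\w_t^k=\frac1n\sum_{i=1}^n\w_{t,i}^k .
\]

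\textbf{Step 1: $\g^k$ lies in the Goldstein subdifferential.} Since $\|\Delta_{t,i}^k\|\le D$, we have $\|\bar\Delta_t^k\|\le D$. With the shared weight $s_t^k$ we get $\bar\w_t^k=\bar\x_{t-1}^k+s_t^k\bar\Delta_t^k$, so every $\bar\w_t^k$ lies within the averaged trajectory of epoch $k$. Consequently,
\[
\|\bar\w_t^k-\bar\w_{t'}^k\|\le TD=\delta/2=\delta'
\]
for all $t,t'$. The point $\bar\w^k$ is the average of the $\bar\w_t^k$, so each $\bar\w_t^k$ lies within $\delta'$ of $\bar\w^k$. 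Since $f_{\delta'}$ is differentiable, $\nabla f_{\delta'}(\bar\w_t^k)\in\partial f_{\delta'}(\bar\w_t^k)$. By convexity of the Goldstein set, $\g^k\in\partial_{\delta'}f_{\delta'}(\bar\w^k)$, and therefore $\|\nabla f(\bar\w^k)\|_\delta\le\|\g^k\|$ holds deterministically.

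\textbf{Step 2: replace global gradients by local ones.} By the triangle inequality,
\[
\|\g^k\|\le\Big\|\frac1{nT}\sum_{t,i}\nabla f_{i,\delta'}(\w_{t,i}^k)\Big\|+\frac1{nT}\sum_{t,i}\big\|\nabla f_{i,\delta'}(\bar\w_t^k)-\nabla f_{i,\delta'}(\w_{t,i}^k)\big\|.
\]
By Lemma~\ref{lem:smoothing}(iii), each $f_{i,\delta'}$ has $c\sqrt dL/\delta'=2c\sqrt dL/\delta$-Lipschitz gradients. Hence the second term is at most $(2c\sqrt dL/\delta)\max_{t,i}\|\w_{t,i}^k-\bar\w_t^k\|$ in the averaged sense.

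\textbf{Step 3: consensus of the interpolation points.} This is the key step, and it is where the shared weight matters. We have
\[
\w_{t,i}^k-\bar\w_t^k=(\x_{t-1,i}^k-\bar\x_{t-1}^k)+s_t^k(\Delta_{t,i}^k-\bar\Delta_t^k).
\]
Using $s_t^k\le1$, Lemma~\ref{lem:consensus-inheritance} (together with the base case for $t-1=0$, which is the end of the previous epoch, or zero at initialization), and the definition of $\mathcal{C}_{T,K}$, we obtain
\[
\E\|\w_{t,i}^k-\bar\w_t^k\|\le\big((k-1)T+t-1\big)\mathcal{C}_{T,K}+\mathcal{C}_{T,K}\le\big((k-1)T+t\big)\mathcal{C}_{T,K}\le KT\,\mathcal{C}_{T,K}.
\]
The main obstacle is matching the constant exactly, since Step 2 gives $2c\sqrt dL\cdot KT\mathcal{C}_{T,K}/\delta$ while the target is $c\sqrt dL(KT+1)\mathcal{C}_{T,K}/\delta$. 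I would first try to recover the factor of two by averaging over $t$: the bound $((k-1)T+t)$ averaged over $t$ and then over $k$ gives $\frac1K\sum_k\big((k-1)T+\frac{T+1}2\big)=\frac{KT+1}2$, which cancels the $2$ exactly and yields $(KT+1)/2$. Combining this with the factor $2c\sqrt dL/\delta$ produces the claimed term $c\sqrt dL(KT+1)\mathcal{C}_{T,K}/\delta$. Taking expectations and averaging over $k$ then completes the proof.
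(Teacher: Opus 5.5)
Your proposal is correct and follows the paper's proof essentially step for step. The shared steps are the membership of $\frac1T\sum_t\nabla f_{\delta'}(\bar\w_t^k)$ in $\partial_{\delta'}f_{\delta'}(\bar\w^k)$ via $\|\bar\w_t^k-\bar\w^k\|\le TD=\delta'$, the interpolation-point consensus bound $((k-1)T+t)\mathcal{C}_{T,K}$, and the averaging $\frac{1}{KT}\sum_{k,t}((k-1)T+t)=\frac{KT+1}{2}$ that absorbs the factor $1/\delta'=2/\delta$. The only fix needed is to drop the intermediate $\max_{t,i}$ phrasing in Step 2. Keep the error term as the average $\frac{1}{nT}\sum_{t,i}$ before taking expectations, because the consensus bound is available only for each $(t,i)$ in expectation, not for the maximum.
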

It is easy to complete this proof by combining Lemma \ref{transfer-gradient-point} with \eqref{eq:sum-epoch-gradient-bound-simplied}.

\section{Proof of Lemma \ref{lem:consensus-inheritance}}
For any epoch $k\in[K]$ and iteration $t\in[T]$, the update
rule in Algorithm~\ref{alg:donc} and the definitions of $\bar{\x}_t^k$ and $\bar\Delta_t^k$ give
\begin{equation*}
\begin{split}
  \E\left[\|\x_{t,i}^k-\bar\x_t^k\|\right]
  & =\E\left[\left\|\x_{t-1,i}^k+\Delta_{t,i}^k
  -\frac1n\sum_{j=1}^n(\x_{t-1,j}^k+\Delta_{t,j}^k)\right\|\right]\\
  & =\E\left[\left\|(\x_{t-1,i}^k-\bar\x_{t-1}^k)
  +(\Delta_{t,i}^k-\bar\Delta_t^k)\right\|\right]\\
  &\leq\E\left[\|\x_{t-1,i}^k-\bar\x_{t-1}^k\|\right]
  +\E\left[\|\Delta_{t,i}^k-\bar\Delta_t^k\|\right].
\end{split}
\end{equation*}
Applying this inequality recursively within
epoch $k$, we obtain
\begin{equation}
\begin{split}
  \E\left[\|\x_{t,i}^k-\bar\x_t^k\|\right]
  &\leq\E\left[\|\x_{0,i}^k-\bar\x_0^k\|\right]
  +\sum_{s=1}^t\E\left[\|\Delta_{s,i}^k-\bar\Delta_s^k\|\right]\\
  &\leq\E\left[\|\x_{0,i}^k-\bar\x_0^k\|\right]
  +t\mathcal{C}_{T,K}
\end{split}
\label{eq:iterate-gap-1}
\end{equation}
where the last inequality is due to the definition of $\mathcal{C}_{T,K}$.
For any $k\geq 2$, from \eqref{eq:iterate-gap-1}, it is easy to verify that
\begin{equation}
\begin{split}
  \E\left[\|\x_{0,i}^k-\bar\x_0^k\|\right]
  &=\E\left[\|\x_{T,i}^{k-1}-\bar\x_T^{k-1}\|\right]\leq\E\left[\|\x_{0,i}^{k-1}-\bar\x_0^{k-1}\|\right]
  +T\mathcal{C}_{T,K}\\
  &\leq\E\left[\|\x_{0,i}^{1}-\bar\x_0^{1}\|\right]
  +(k-1)T\mathcal{C}_{T,K}=(k-1)T\mathcal{C}_{T,K}
\end{split}
\label{eq:iterate-gap-2}
\end{equation}
where the last equality is due to the common initialization
$\x_{0,i}^1=\x_{T,i}^0=\ze$ for all $i\in[n]$. Moreover, for $k=1$, we have
\begin{equation}
\begin{split}
  \E\left[\|\x_{0,i}^k-\bar\x_0^k\|\right]=\E\left[\|\x_{0,i}^1-\bar\x_0^1\|\right]=0=(k-1)T\mathcal{C}_{T,K}
\end{split}
\label{eq:iterate-gap-3}
\end{equation}
By substituting \eqref{eq:iterate-gap-2} and \eqref{eq:iterate-gap-3} into \eqref{eq:iterate-gap-1}, we finally have
\[
  \E\left[\|\x_{t,i}^k-\bar\x_t^k\|\right]
  \leq(k-1)T\mathcal{C}_{T,K}+t\mathcal{C}_{T,K}
  =((k-1)T+t)\mathcal{C}_{T,K}.
\]

\section{Proof of Lemma \ref{transfer-gradient-point}}
From \eqref{eq:smoothing-stationarity} derived from Lemma~\ref{lem:smoothing-goldstein}, we only need to 
derive an upper bound on $\E[\|\nabla f_{\delta^\prime}(\bar{\w})\|_{\delta^\prime}]$, where $\delta^\prime=\delta/2$.
Due to $\bar{\w}\sim\operatorname{Unif}\{\bar{\w}^{1},\ldots,\bar{\w}^{K}\}$, it is easy to verify that
\begin{equation}
\E[\|\nabla f_{\delta^\prime}(\bar{\w})\|_{\delta^\prime}]=\frac{1}{K}\E\left[\sum_{k=1}^K\|\nabla f_{\delta^\prime}(\bar{\w}^k)\|_{\delta^\prime}\right].
\label{eq:goal-1}
\end{equation}
Let $\bar\w_t^k=(1/n)\sum_{i=1}^n\w_{t,i}^k$. To utilize \eqref{eq:goal-1}, we need to bound $\|\bar{\w}^k-\bar\w_t^k\|$. Specifically, for any $1\leq t^\prime <t\leq T$, we have
\begin{equation}
\begin{split}
\bar\w_t^k-\bar\w_{t^\prime}^k=&(\bar\x_{t-1}^k+s_t^k\bar\Delta_t^k)-(\bar\x_{t^\prime-1}^k+s_{t^\prime}^k\bar\Delta_{t^\prime}^k)= (1-s_{t^\prime}^k)\bar\Delta_{t^\prime}^k+\sum_{r=t^\prime+1}^{t-1}\bar\Delta_r^k+s_t^k\bar\Delta_t^k
\end{split}
\label{eq:goal-2}
\end{equation}
where the first equality is due to $\w_{t,i}^{k}=\x_{t-1,i}^{k}+s_t^k\Delta_{t,i}^{k}$ and the second one is due to $ \bar\x_t^k=\bar\x_{t-1}^k+\bar\Delta_t^k$.

From \eqref{eq:goal-2}, it is natural to derive that
\begin{equation}
\begin{split}
\|\bar\w_t^k-\bar\w_{t^\prime}^k\|\leq (1-s_{t^\prime}^k)\|\bar\Delta_{t^\prime}^k\|+\sum_{r=t^\prime+1}^{t-1}\|\bar\Delta_r^k\|+s_t^k\|\bar\Delta_t^k\|\leq (t-t^\prime+1)D\leq TD
\end{split}
\label{eq:goal-3}
\end{equation}
where the second inequality is due to $\Delta_{t,i}^k\in\mathcal{K}=\{\x\in\R^d:\|\x\|\leq D\},\forall i\in[n],t\in[T],k\in[K]$.

By combining the definition of $\bar\w^k$ with \eqref{eq:goal-3}, we have
\begin{equation}
  \|\bar\w_t^k-\bar\w^k\|
  =\left\|\frac1T\sum_{t^\prime=1}^T(\bar\w_t^k-\bar\w_{t^\prime}^k)\right\|
  \leq\frac1T\sum_{t^\prime=1}^T\|\bar\w_t^k-\bar\w_{t^\prime}^k\|
  \leq TD=\delta^\prime
  \label{eq:interpolation-output-distance}
\end{equation}
where the last inequality is due to $D=\delta/(2T)$. Recall that $\|\nabla f_{\delta^\prime}(\bar{\w}^k)\|_{\delta^\prime}=\min\{\|\g\|:\g\in\partial_{\delta^\prime} f_{\delta^\prime}(\bar{\w}^k)\}$. According to the definition of 
$\partial_{\delta^\prime} f_{\delta^\prime}(\bar{\w}^k)$, we only need to consider the Clarke subdifferential of $f_{\delta'}$ at $\{\bar\w_{t}^k\}_{t\in[T]}$. 
Moreover, due to the third property of Lemma~\ref{lem:smoothing}, both $f_{i,\delta'}$ and their average $f_{\delta'}$ have
$(c\sqrt{d}L/\delta')$-Lipschitz gradients for some constant $c>0$. Therefore, the Clarke subdifferential
of $f_{\delta^\prime}$ at any point consists of its gradient alone. From \eqref{eq:interpolation-output-distance}, we have
\begin{equation}
\begin{split}
  \frac1T\sum_{t=1}^T\nabla f_{\delta'}(\bar\w_t^k)
  &\in\operatorname{conv}(\{\nabla f_{\delta'}(\bar\w_t^k):t\in[T]\})\subseteq\operatorname{conv}\!\left(
  \bigcup_{\|\y-\bar\w^k\|\leq\delta^\prime}\partial f_{\delta'}(\y)\right)
  =\partial_{\delta^\prime}f_{\delta'}(\bar\w^k).
\end{split}
\label{eq:average-gradient-membership}
\end{equation}
By combining \eqref{eq:goal-1} with \eqref{eq:average-gradient-membership}, we have
\begin{equation}
\E[\|\nabla f_{\delta^\prime}(\bar{\w})\|_{\delta^\prime}]\leq \frac{1}{K}\E\left[\sum_{k=1}^K\left\|\frac1T\sum_{t=1}^T\nabla f_{\delta'}(\bar\w_t^k)\right\|\right].
\label{eq:goal-4}
\end{equation}
Moreover, for any $i\in[n]$, $t\in[T]$, and $k\in[K]$, we notice that
\begin{equation}
\begin{split}
    \E[\|\w_{t,i}^k-\bar{\w}_t^k\|]=&\E\left[\left\|\x_{t-1,i}^{k}+s_t^k\Delta_{t,i}^{k}-\frac{1}{n}\sum_{j=1}^n
    (\x_{t-1,j}^{k}+s_t^k\Delta_{t,j}^{k})\right\|\right]\\
    =&\E\left[\left\|\x_{t-1,i}^{k}-\bar{\x}_{t-1}^k+s_t^k\Delta_{t,i}^{k}-s_t^k\bar{\Delta}_t^k\right\|\right]\\
    \leq &\E\left[\|\x_{t-1,i}^{k}-\bar{\x}_{t-1}^k\|\right]+\E\left[\|\Delta_{t,i}^{k}-\bar{\Delta}_t^k\|\right]\\
    \leq &((k-1)T+t)\mathcal{C}_{T,K}
\end{split}
\label{eq:solution-gap}
\end{equation}
where the last inequality is due to Lemma \ref{lem:consensus-inheritance} and the definition of $\mathcal{C}_{T,K}$.

Therefore, we have
\begin{equation}
\begin{split}
 &\E\left[\left\|\nabla f_{\delta'}(\bar\w_t^k)-\frac{1}{n}\sum_{i=1}^n
 \nabla f_{i,\delta'}(\w_{t,i}^k)\right\|\right]= \E\left[\left\|\frac{1}{n}\sum_{i=1}^n
\left(\nabla f_{i,\delta'}(\bar\w_t^k)- \nabla f_{i,\delta'}(\w_{t,i}^k)\right)\right\|\right]\\
\leq& \frac{c\sqrt{d}L}{n\delta^\prime}\sum_{i=1}^n\E\left[\left\|\bar\w_t^k-\w_{t,i}^k\right\|\right]\leq \frac{c\sqrt{d}L((k-1)T+t)\mathcal{C}_{T,K}}{\delta^\prime}\\
\end{split}
\label{eq:solution-gradient-gap}
\end{equation}
where the last inequality is due to \eqref{eq:solution-gap}.

By combining \eqref{eq:goal-4} with \eqref{eq:solution-gradient-gap}, we have
\begin{equation}
\begin{split}
\E[\|\nabla f_{\delta^\prime}(\bar{\w})\|_{\delta^\prime}]\leq& \frac{1}{K}\E\left[\sum_{k=1}^K\left\|\frac{1}{nT}\sum_{t=1}^T\sum_{i=1}^n
 \nabla f_{i,\delta'}(\w_{t,i}^k)\right\|\right]+\frac{c\sqrt{d}L\mathcal{C}_{T,K}}{KT\delta^\prime}\sum_{k=1}^K\sum_{t=1}^T((k-1)T+t)\\
 \leq &\frac{1}{K}\E\left[\sum_{k=1}^K\left\|\frac{1}{nT}\sum_{t=1}^T\sum_{i=1}^n
 \nabla f_{i,\delta'}(\w_{t,i}^k)\right\|\right]+\frac{c\sqrt{d}L(KT+1)\mathcal{C}_{T,K}}{\delta}
\end{split}
\label{eq:goal-5}
\end{equation}
Finally, we can complete this proof by combining \eqref{eq:smoothing-stationarity} with \eqref{eq:goal-5}.

\section{Proof of Proposition~\ref{prop:dftrl}}
\label{app:dftrl-proof}
The original analysis of \citet{wan2024nearly} assumes a deterministic bound on the
local gradient norms, i.e., $\|\g_{t,i}\|$. In our setting, Assumption~\ref{assum:oracle} only
provides a second-moment bound, so their guarantee cannot be applied directly. Fortunately, 
it is not hard to extend their analysis to our setting by using an expected bound regarding 
the local gradient norms. 

For brevity, we fix an epoch $k$ and omit the epoch superscript. Let $\mathbf v_t$ be the unscaled stochastic gradient queried at round $t$ of the epoch $k$, i.e.,
\[
  \mathbf v_t=\nabla F_{i_t}
  (\w_{t,i_t}+(\delta/2)\u_{t,i_t};\xi_{t,i_t}).
\]
Due to client sampling, $\g_{t,i_t}=n\mathbf v_t$ and $\g_{t,i}=\ze$ for $i\ne i_t$,
so $\bar\g_t=(1/n)\sum_{i=1}^n\g_{t,i}=\mathbf v_t$. From Assumption~\ref{assum:oracle},
it is easy to verify that
\begin{equation}
  \E\|\bar\g_t\|^2
  =\E\|\mathbf v_t\|^2\leq G^2, \quad \E\|\g_{t,i}\|^2
  =n^2\E[\mathbf 1_{\{i_t=i\}}\|\mathbf v_t\|^2]
    \leq nG^2,i\in[n].
\label{eq:app-global-moment}
\end{equation}
Let
$\mathbf s_t=\sum_{s=1}^t\bar\g_s$ with $\mathbf s_0=\ze$. Following \citet{wan2024nearly}, we 
define the virtual global FTRL decision for any $t\in[T+1]$ by
\[
  \mathbf{y}_t=\argmin_{\x\in\mathcal{K}}
  \left\{\langle\mathbf{s}_{t-1},\x\rangle
    +\frac{\|\x\|^2}{2\eta}\right\}
  =\Pi_{\mathcal{K}}(-\eta\mathbf{s}_{t-1}),
\]
where $\Pi_{\mathcal{K}}(\cdot)$ denotes Euclidean projection onto $\mathcal{K}$. In Algorithm \ref{alg:dftrl},
the local decision can be similarly rewritten as $\Delta_{t,i}=\Pi_{\mathcal{K}}(-\eta\mathbf{q}_{t-1,i})$. 
Then, the difference between the local and virtual decisions can be bounded by combining $\|\mathbf{q}_{t-1,i}-\mathbf{s}_{t-1}\|$ with the following lemma.
\begin{lem}[Lemma 5 in \citet{DADO2011}]
\label{lem-stab}
For any $\eta>0$ and $\u,\v\in\R^d$, it holds that
\[
  \|\Pi_{\mathcal{K}}(-\eta\u)
    -\Pi_{\mathcal{K}}(-\eta\v)\|
  \leq\eta\|\u-\v\|.
\]
\end{lem}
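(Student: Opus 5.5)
The statement is the nonexpansiveness of Euclidean projection onto a closed convex set, composed with the linear scaling $\u\mapsto-\eta\u$. I will prove the general fact $\|\Pi_{\mathcal{K}}(\mathbf{a})-\Pi_{\mathcal{K}}(\mathbf{b})\|\leq\|\mathbf{a}-\mathbf{b}\|$ for all $\mathbf{a},\mathbf{b}\in\R^d$, and then set $\mathbf{a}=-\eta\u$ and $\mathbf{b}=-\eta\v$. This gives $\|\mathbf{a}-\mathbf{b}\|=\eta\|\u-\v\|$ because $\eta>0$. Here $\mathcal{K}=\{\x:\|\x\|\leq D\}$ is nonempty, closed and convex, so projections exist and are unique. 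Alternatively, for the ball one could use the explicit formula $\Pi_{\mathcal{K}}(\mathbf{a})=\mathbf{a}\min\{1,D/\|\mathbf{a}\|\}$, but the general argument is cleaner.

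\textbf{Step 1 (variational inequality).} Write $\mathbf{p}=\Pi_{\mathcal{K}}(\mathbf{a})$ and $\mathbf{p}'=\Pi_{\mathcal{K}}(\mathbf{b})$. The first-order optimality condition for minimizing $\frac12\|\x-\mathbf{a}\|^2$ over $\mathcal{K}$ gives
\[
\langle \mathbf{a}-\mathbf{p},\,\x-\mathbf{p}\rangle\leq 0\quad\text{for all }\x\in\mathcal{K}.
\]
This follows from expanding $\|\mathbf{a}-(\mathbf{p}+\lambda(\x-\mathbf{p}))\|^2\geq\|\mathbf{a}-\mathbf{p}\|^2$ for $\lambda\in(0,1]$, dividing by $\lambda$, and letting $\lambda\to0$. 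Convexity guarantees that $\mathbf{p}+\lambda(\x-\mathbf{p})\in\mathcal{K}$.

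\textbf{Step 2 (combine the two inequalities).} Apply Step 1 at $\mathbf{a}$ with $\x=\mathbf{p}'$, and at $\mathbf{b}$ with $\x=\mathbf{p}$. Adding the two inequalities yields $\langle (\mathbf{a}-\mathbf{b})-(\mathbf{p}-\mathbf{p}'),\,\mathbf{p}'-\mathbf{p}\rangle\leq0$. Rearranging gives
\[
\|\mathbf{p}-\mathbf{p}'\|^2\leq\langle \mathbf{a}-\mathbf{b},\,\mathbf{p}-\mathbf{p}'\rangle\leq\|\mathbf{a}-\mathbf{b}\|\,\|\mathbf{p}-\mathbf{p}'\|
\]
by Cauchy--Schwarz. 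Dividing by $\|\mathbf{p}-\mathbf{p}'\|$ gives nonexpansiveness; if $\mathbf{p}=\mathbf{p}'$ the claim is trivial.

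\textbf{Step 3 (specialize).} Substituting $\mathbf{a}=-\eta\u$ and $\mathbf{b}=-\eta\v$ gives the stated bound. No step is a real obstacle. The only point needing care is Step 1's limiting argument, which uses convexity of $\mathcal{K}$ (a Euclidean ball) so that the segment between $\mathbf{p}$ and any $\x\in\mathcal{K}$ stays in $\mathcal{K}$.
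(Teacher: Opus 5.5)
Your proof is correct and complete. The paper gives no proof of its own; it cites the lemma of Duchi, Agarwal, and Wainwright. That argument writes the optimality conditions at the two minimizers of $\langle \mathbf{z},\x\rangle+\psi(\x)/\eta$, sums them, and then applies strong convexity of $\psi$ and Cauchy--Schwarz. With $\psi(\x)=\|\x\|^2/2$ the minimizer is $\Pi_{\mathcal{K}}(-\eta\mathbf{z})$, and the argument becomes exactly your Steps 1--2.
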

Let $c_0=1-1/\sqrt2$, $\rho=1-c_0\sqrt\gamma\in(0,1)$, and $\beta=\rho^R\in(0,1)$. Following the proof of Lemma 2 in \citet{wan2024nearly}, for any $t\geq 2$, it is not hard to verify that
\begin{equation}
\begin{split}
\|\mathbf{q}_{t-1,i}-\mathbf{s}_{t-1}\|\leq \sqrt{14}\sum_{\tau=1}^{t-1}\rho^{(t-\tau)R}\sqrt{\sum_{j=1}^n\|\g_{\tau,j}\|^2}.
\end{split}
\label{eq:general-consensus-bound}
\end{equation}
Taking expectations in \eqref{eq:general-consensus-bound}, for any $t\geq 2$, we obtain
\begin{equation}
\begin{split}
  \E\|\mathbf q_{t-1,i}-\mathbf s_{t-1}\|
  &\leq\sqrt{14}\sum_{\tau=1}^{t-1}
    \rho^{(t-\tau)R}
    \E\left[\left(\sum_{j=1}^n\|\g_{\tau,j}\|^2\right)^{1/2}\right]\\
  &\leq\sqrt{14}\sum_{\tau=1}^{t-1}
    \rho^{(t-\tau)R}
    \left(\sum_{j=1}^n\E\|\g_{\tau,j}\|^2\right)^{1/2}\\
  &\leq\sqrt{14}\,nG\sum_{\tau=1}^{t-1}\beta^{t-\tau}
  \leq\frac{\sqrt{14}\,nG\beta}{1-\beta}
  \label{eq:app-state-error}
\end{split}
\end{equation}
where the second inequality is due to Jensen's inequality and the third one is due to \eqref{eq:app-global-moment}. 
For $t=1$, the same bound holds trivially since $\mathbf q_{0,i}=\mathbf s_0=\ze$.

By combining \eqref{eq:app-state-error} with Lemma~\ref{lem-stab}, for any $i\in[n]$ and $t\in[T]$, we have
\begin{equation}
  \E\|\Delta_{t,i}-\mathbf y_t\|
  \leq\eta\E\|\mathbf q_{t-1,i}-\mathbf s_{t-1}\|
  \leq \frac{\sqrt{14}\eta nG\beta}{1-\beta}.
  \label{eq:app-decision-error}
\end{equation}
Due to $\bar\Delta_t=(1/n)\sum_{j=1}^n\Delta_{t,j}$ and \eqref{eq:app-decision-error}, it is easy to verify that
\begin{equation}
\begin{split}
  \E\|\Delta_{t,i}-\bar\Delta_t\|
  \leq\E\|\Delta_{t,i}-\mathbf y_t\|
    +\frac1n\sum_{j=1}^n\E\|\Delta_{t,j}-\mathbf y_t\|\leq\frac{2\sqrt{14}\eta nG\beta}{1-\beta}.
\end{split}
\label{eq:app-maximum-consensus-bound}
\end{equation}
By the choice of $R$ in \eqref{eq:gossip-rounds}, we have
\begin{equation}
  \beta=\rho^R\leq\exp(-c_0\sqrt\gamma R)
  \leq\frac{\min\{1,\zeta\sqrt T\}}
  {\min\{1,\zeta\sqrt T\}+2\sqrt{14}\,n}.
  \label{eq:beta-up}
\end{equation}
By substituting this bound and $\eta=D/(G\sqrt T)$ into
\eqref{eq:app-maximum-consensus-bound}, we have
\[
  \max_{i,t}\E\|\Delta_{t,i}-\bar\Delta_t\|
  \leq\frac{2\sqrt{14}\,nD}{\sqrt T}\frac{\beta}{1-\beta}
  \leq\frac{D}{\sqrt T}\min\{1,\zeta\sqrt T\}
  \leq\zeta D.
\]
Note that such an upper bound holds for every epoch $k\in[K]$. Thus, we can choose $\mathcal C_{T,K}=\zeta D$, as stated in
Proposition~\ref{prop:dftrl}.

It remains to bound the regret. Let $\operatorname{Reg}_{T,i}$ denote the
realized regret inside the expectation in \eqref{two-definitions}, with the
epoch index suppressed, i.e.,
\[
\operatorname{Reg}_{T,i}=\sum_{t=1}^T\sum_{j=1}^n\langle\g_{t,j},\Delta_{t,i}\rangle-\min_{\u\in\mathcal{K}}\sum_{t=1}^T\sum_{j=1}^n\langle\g_{t,j},\u\rangle.
\]
From the proof of Theorem~1 in \citet{wan2024nearly}, for every $\u\in\mathcal K$,
the virtual global FTRL decision ensures that
\begin{equation}
  \sum_{t=1}^T\langle\bar\g_t,\mathbf{y}_t-\u\rangle
  \leq\frac{\|\u\|^2}{2\eta}
   +\eta\sum_{t=1}^T\|\bar\g_t\|^2
  \leq\frac{D^2}{2\eta}
   +\eta\sum_{t=1}^T\|\bar\g_t\|^2.
  \label{eq:app-pathwise-ftrl}
\end{equation}
Let $\u^\ast\in\argmin_{\u\in\mathcal{K}} \sum_{t=1}^T\sum_{j=1}^n\langle\g_{t,j},\u\rangle$. From \eqref{eq:app-pathwise-ftrl}, it is not hard to verify that
\begin{equation}
\frac{1}{n}\operatorname{Reg}_{T,i}= \sum_{t=1}^T\langle\bar{\g}_{t},\Delta_{t,i}-\u^\ast\rangle\leq \frac{D^2}{2\eta}+\eta\sum_{t=1}^T\|\bar\g_t\|^2
   +\sum_{t=1}^T\langle\bar\g_t,\Delta_{t,i}-\mathbf{y}_t\rangle.
   \label{eq:reg-1}
\end{equation}
Let $B_R=\sqrt{14}\,nG\beta/(1-\beta)$ for brevity. From \eqref{eq:app-global-moment} and Jensen's inequality,
we have
\[
  \E\langle\bar\g_t,\Delta_{t,i}-\mathbf y_t\rangle
  \leq G\,\E\|\Delta_{t,i}-\mathbf y_t\|
  \leq\eta GB_R
\]
where the second inequality is due to \eqref{eq:app-decision-error}.

Then, taking expectations on both sides of \eqref{eq:reg-1} and combining \eqref{eq:app-global-moment}, it is not hard to derive that
\[
  \frac1n\E[\operatorname{Reg}_{T,i}]
  \leq\frac{D^2}{2\eta}+\eta TG^2+\eta TGB_R.
\] 
From \eqref{eq:beta-up}, we have
\[
 \frac{B_R}{G}\leq\frac{\min\{1,\zeta\sqrt T\}}{2}.
\]
By combining the above two inequalities with $\eta=D/(G\sqrt T)$, we have
\begin{align*}
  \E[\operatorname{Reg}_{T,i}]
  &\leq nDG\sqrt T\left(\frac32+\frac{\min\{1,\zeta\sqrt T\}}{2}\right)
  \leq2nDG\sqrt T
\end{align*}
Note that such an upper bound holds for every epoch $k\in[K]$. Thus, we can choose $\mathcal R_T=2nDG\sqrt T$, as stated in
Proposition~\ref{prop:dftrl}.


\end{document}